\documentclass[letterpaper,11pt,twoside,keywordsasfootnote,addressatend,noinfoline]{article}

\usepackage{fullpage}
\usepackage[english]{babel}
\usepackage{amssymb}
\usepackage{amsmath}
\usepackage{theorem}
\usepackage{epsfig}
\usepackage{subfigure}
\usepackage{imsart}
\usepackage{color}

\theorembodyfont{\slshape}
\newtheorem{theor}{Theorem}
\newtheorem{conjc}[theor]{Conjecture}
\newtheorem{lemma}[theor]{Lemma}
\newtheorem{corol}[theor]{Corollary}

\newenvironment{proof}{\noindent{\scshape Proof.}}{\hspace{2mm} $\square$}

\newcommand{\Z}{\mathbb{Z}}
\newcommand{\D}{\mathbb{D}}
\newcommand{\R}{\mathbb{R}}
\newcommand{\ep}{\epsilon}
\newcommand{\ind}{\mathbf 1}

\DeclareMathOperator{\card}{card}
\DeclareMathOperator{\supp}{supp}

% \newcommand{\blue}[1]{\textcolor{blue}{#1}}

%%%%%%%%%%%%%%%%%%%%%%%%%%%%%%%%%%%%%%%%%%%%%%%%%%%%%%%%%%%%%%%%%%%%%%%%%%%%%%%%%%%%%%%%%%%%%%%%%%%%%%%%%%%%%%%%%%%%%%%%%%%%%%%%%%%%%%%%%%

\begin{document}

\begin{frontmatter}

\title     {Stochastic dynamics on hypergraphs and \\ the spatial majority rule model}
\runtitle  {Stochastic dynamics on hypergraphs}
\author    {N. Lanchier\thanks{Research supported in part by NSF Grant DMS-10-05282.} and J. Neufer}
\runauthor {N. Lanchier and J. Neufer}
\address   {School of Mathematical and Statistical Sciences, \\ Arizona State University, \\ Tempe, AZ 85287, USA.}
% \address   {School of Mathematical and \\ Statistical Sciences, \\ Arizona State University, \\ Tempe, AZ 85287, USA.}

\begin{abstract} \ \ This article starts by introducing a new theoretical framework to model spatial systems which is obtained
 from the framework of interacting particle systems by replacing the traditional graphical structure that defines the
 network of interactions with a structure of hypergraph.
 This new perspective is more appropriate to define stochastic spatial processes in which large blocks of vertices may flip
 simultaneously, which is then applied to define a spatial version of the Galam's majority rule model.
 In our spatial model, each vertex of the lattice has one of two possible competing opinions, say opinion 0 and opinion 1,
 as in the popular voter model.
 Hyperedges are updated at rate one, which results in all the vertices in the hyperedge changing simultaneously their
 opinion to the majority opinion of the hyperedge.
 In the case of a tie in hyperedges with even size, a bias is introduced in favor of type 1, which is motivated by
 the principle of social inertia.
 Our analytical results along with simulations and heuristic arguments suggest that, in any spatial dimensions and when
 the set of hyperedges consists of the collection of all $n \times \cdots \times n$ blocks of the lattice, opinion 1 wins
 when $n$ is even while the system clusters when $n$ is odd, which contrasts with results about the voter model in high
 dimensions for which opinions coexist.
 This is fully proved in one dimension while the rest of our analysis focuses on the cases when $n = 2$ and $n = 3$ in
 two dimensions.
\end{abstract}

\begin{keyword}[class=AMS]
\kwd[Primary ]{60K35}
\end{keyword}

\begin{keyword}
\kwd{Interacting particle systems, hypergraph, social group, majority rule, voter model.}
\end{keyword}

\end{frontmatter}

%%%%%%%%%%%%%%%%%%%%%%%%%%%%%%%%%%%%%%%%%%%%%%%%%%%%%%%%%%%%%%%%%%%%%%%%%%%%%%%%%%%%%%%%%%%%%%%%%%%%%%%%%%%%%%%%%%%%%%%%%%%%%%%%%%%%%%%%%%

\section{Introduction}
\label{sec:intro}

\indent There has been recently a growing interest in spatial models of social and cultural dynamics, where space is modeled through an
 underlying graph whose vertices represent individuals and edges potential dyadic interactions (see
 Castellano \emph{et al} \cite{castellano_fortunato_loreto_2009} for a review).
 Such systems are commonly called agent-based models while the mathematical term is interacting particle systems.
 There has been an increasing effort of applied scientists to understand such systems based on either heuristic arguments or numerical
 simulations.
 This effort must be complemented by analytical studies of interacting particle systems of interest in social sciences, mainly because
 stochastic spatial simulations are known to be difficult to interpret and might lead to erroneous conclusions.
 However, while the mathematical study of interacting particle systems over the past forty years has successfully led to a better
 understanding of a number of physical and biological systems, much less attention has been paid to the field of social sciences,
 with the notable exception of the popular voter model introduced independently in \cite{clifford_sudbury_1973, holley_liggett_1975}.
 The first objective of this paper is to extend the traditional framework of interacting particle systems by replacing the underlying
 graph that represents the network of interactions with the more general structure of a hypergraph, which is better suited to model social
 dynamics.
 The second objective is to use this new framework to construct a spatial version of the majority rule model proposed by
 Galam \cite{galam_2002} to describe public debates, and also to initiate a rigorous analysis of this model of interest in social sciences.
 Even though our proofs rely mostly on new techniques, our model is somewhat reminiscent of the voter model, so we start with a description
 of the voter model and a review of its main properties. \vspace{-8pt} \\
%  However, while the mathematical study of interacting particle systems over the past forty years has successfully led to a better
%  understanding of a number of physical and biological systems, the field of social dynamics has been essentially ignored by mathematicians,
%  with the notable exception of the voter model introduced in \cite{clifford_sudbury_1973, holley_liggett_1975}.
%  Using traditional techniques in the field of interacting particle systems as well as new techniques, the first author and other researchers
%  have recently initiated this study by collecting important analytical results for two popular models of social dynamics, the Axelrod model
%  and the Deffuant model, that we review below.
%  This article continues in this spirit with the majority rule model proposed by Galam to model public debates. \vspace{-8pt} \\

\noindent {\bf The voter model} --
 In the voter model, individuals are located on the vertex set of a graph and are characterized by one of two possible competing
 opinions, say opinion 0 and opinion 1.
 In particular, the state of the system at time $t$ is a so-called spatial configuration $\eta_t$ that can be viewed either as a
 function that maps the vertex set into $\{0, 1 \}$, in which case the value of $\eta_t (x)$ represents the opinion of vertex $x$ at
 time $t$, or as a subset of the vertex set, i.e., the set of vertices with opinion~1 at time~$t$.
 The system evolves as follows:
 individuals independently update their opinion at rate one, i.e., at the times of independent Poisson processes with intensity one, by
 mimicking one of their nearest neighbors chosen uniformly at random.
 Since the times between consecutive updates are independent exponential random variables, the probability of two simultaneous updates
 is equal to zero, therefore the process is well-defined when the number of individuals is finite.
 On infinite graphs, since there are infinitely many Poisson processes, the time to the first update does not exist, and to prove
 that the voter model is well-defined, one must show in addition that the opinion at any given space-time point results from only finitely
 many interactions.
 This follows from an argument due to Harris \cite{harris_1972}.
 The idea is that for $t > 0$ small the vertex set can be partitioned into finite islands that do not interact with each other by time $t$,
 which allows to construct the process independently on each of those islands up to time $t$, and by induction at any time. \\
\indent We now specialize in the $d$-dimensional regular lattice in which each vertex is connected by an edge to each of its $2d$
 nearest neighbors.
 The argument of Harris \cite{harris_1972} mentioned above allows to construct the voter model graphically from a collection of independent
 Poisson processes, but also to prove a certain duality relationship between the voter model and a system of coalescing random walks
 on the lattice.
 Using this duality relationship and the fact that simple symmetric random walks are recurrent in $d \leq 2$ but transient in $d \geq 3$,
 one can prove that clustering occurs in one and two dimensions, i.e., any two vertices eventually share the same opinion which
 translates into the formation of clusters that keep growing indefinitely, whereas coexistence occurs in higher dimensions, i.e.,
 the process converges to an equilibrium in which any two vertices have a positive probability of having different
 opinions \cite{clifford_sudbury_1973, holley_liggett_1975}. \\
\indent In view of the dichotomy clustering/coexistence depending on the dimension, a natural question about the voter model is:
 how fast do clusters grow in low dimensions?
 In one dimension, Bramson and Griffeath \cite{bramson_griffeath_1980} proved that the size of the clusters scales asymptotically like
 the square root of time:
 assuming that two vertices are distance $t^a$ apart at time $t$, as $t \to \infty$, both vertices have independent opinions
 when $a > 1/2$ whereas they are totally correlated, i.e., the probability that they share the same opinion tends to one, when $a < 1/2$.
 In contrast, there is no natural scale for the cluster size in two dimensions, a result due to Cox and Griffeath \cite{cox_griffeath_1986}.
 More precisely, both vertices again have independent opinions when $a > 1/2$ but the probability that they share the same opinion when
 $a < 1/2$ is no longer equal to one but linear in $a$. \\
\indent In higher dimensions, though coexistence occurs, the local interactions that dictate the dynamics of the voter
 model again induce spatial correlations, and a natural question is: how strong are the spatial correlations at equilibrium?
 To answer this question, the idea is to look at the random number of vertices with opinion 1 at equilibrium in a large cube minus its
 mean in order to obtain a centered random variable.
 Then, according to the central limit theorem, if opinions at different vertices were independent, this random variable rescaled by
 the square root of the size of the cube would be well approximated by a centered Gaussian.
 However, such a convergence is obtained for a larger exponent in the renormalization factor, a result due to Bramson and
 Griffeath \cite{bramson_griffeath_1979} in three dimensions and extended by Z\"ahle \cite{zahle_2001} to higher dimensions.
 This indicates that, even in high dimensions, spatial correlations are still significant. \\
\indent Another natural question about the voter model is: does the opinion of any given vertex stay the same after a finite random time?
 To answer this question, we look at the fraction of time a given vertex is of type 1 in the long run, a random variable called the
 occupation time.
 When coexistence occurs, vertices change their opinion infinitely often therefore it is expected that the occupation time converges
 almost surely to the initial density of type 1.
 In contrast, when clustering occurs, it is expected that any given vertex will eventually be covered by a cluster and that the previous
 law of large number does not hold.
 However, Cox and Griffeath \cite{cox_griffeath_1983} proved almost sure convergence of the occupation time to the initial density of
 type 1 in dimensions $d \geq 2$.
 This does not hold in one dimension but it can be proved that, even in this case, the type of any given vertex keeps changing indefinitely.
 From the combination of the previous results, we obtain the following description of the voter model on the one- and two-dimensional
 lattices.
 Clusters form and appear to grow indefinitely so only one type is present at equilibrium.
 However, any given vertex flips infinitely often, which also indicates that clusters are not fixed in space but move around, and thus
 may give the impression of local transience though, strictly speaking, coexistence does not occur. \vspace{-8pt} \\

\noindent {\bf The majority rule model} --
 The main objective of this article is to initiate a rigorous analysis of another interacting particle system of interest in social
 sciences: a spatial version of the majority rule model proposed by Galam \cite{galam_2002} to describe public debates.
 In the original nonspatial model, the population is finite and each agent is either in state zero or in state one, representing two
 differing opinions.
 At each time step, a positive integer, say $n$, is randomly chosen according to a given distribution, then $n$ agents are chosen
 uniformly at random from the population.
 These agents form a discussion group which results in all $n$ agents changing simultaneously their opinion to the majority opinion
 of the group.
 The majority rule is well-defined when $n$ is odd while, when $n$ is even and a tie occurs, a bias is introduced in favor of one
 opinion, say opinion 1, which is motivated by the principle of social inertia.
 Note that most (if not all) models of interacting particle systems studied in the mathematics literature are naturally defined
 through an underlying graph which encodes the pairs of vertices that may interact using edges.
 However, to define spatial versions of the majority rule model, one needs a more complex network of interactions since the
 dynamics do not reduce to dyadic interactions: vertices interact by blocks.
 The most natural mathematical structure that can model such a network is the structure of hypergraph.
 In order to define spatial versions of the majority rule model, we thus extend the traditional definition of interacting particle
 systems by replacing the underlying graph structure with that of a hypergraph, an approach that we propose as a new modeling
 framework to describe social and cultural dynamics. \vspace{-8pt} \\

\begin{figure}[t!]
\centering
\includegraphics[width=400pt]{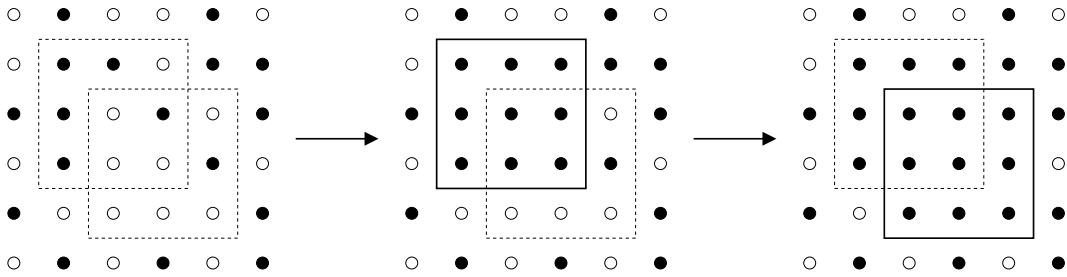}
\caption{\upshape{Dynamics on hypergraph}}
\label{fig:network}
\end{figure}

\pagebreak

\noindent {\bf Stochastic dynamics on hypergraphs} --
 In a number of coordinated sociological systems, people's opinions are subject to change in large groups due to, e.g., the influence
 of an opinion leader or public debates.
 To model such systems, we replace the underlying graph structure of traditional particle systems with one of a hypergraph, which has
 a set of vertices, as does a graph, but the set of edges is replaced by a set of hyperedges, which are no longer limited to a connection
 between two vertices modeling dyadic interactions: hyperedges are nonempty subsets of vertices which can be arbitrarily large.
 In the context of social dynamics, each hyperedge can be thought of as a social group such as a family unit, a team of co-workers,
 or a group of classmates, in which members interact simultaneously.
 To define the framework mathematically, we let $\mathbb H = (V, H)$ be a hypergraph and, for simplicity, restrict ourselves to spin
 systems, i.e., the individual at each vertex is characterized by one of only two possible opinions.
 In particular, as for the voter model described above, the state of the system at time $t$ is a so-called spatial configuration
 $\eta_t$ that can be viewed either as a function that maps the vertex set into $\{0, 1 \}$, in which case the value of $\eta_t (x)$
 represents the opinion of vertex $x$ at time $t$, or as a subset of the vertex set, i.e., the set of vertices with opinion~1 at
 time~$t$.
 Then, we define the dynamics using a Markov generator of the form
\begin{equation}
\label{eq:generator}
  Lf (\eta) \ = \ \sum_{h \in H} \ \sum_{A \subset h} \ c_A (\eta \cap h) \ [f (\eta_{A, h}) - f (\eta)] \quad
      \hbox{where} \ \ \eta_{A, h} = (\eta \setminus h) \cup A.
\end{equation}
 Equation \eqref{eq:generator} simply means that for every hyperedge $h$ and every subset $A \subset h$ of the hyperedge,
 the spins of all vertices in $A$ become 1 while the spins of all the other vertices in the hyperedge become 0, at a rate that
 only depends on $A$ and the configuration in the hyperedge, which is thus denoted by the function $c_A (\eta \cap h)$.
 In particular, we point out that each update of the system corresponds to the simultaneous update of all the vertices in a given
 hyperedge, rather than a single vertex, and that the rate at which a hyperedge is updated only depends on the configuration in this
 hyperedge.
 Also, we recall that an update occurs at rate $c$ if the waiting time for this update is exponentially distributed with mean $1/c$.
%  Note also that the use of hypergraph structures allows a generalization of interacting particle systems since traditional graphs
%  simply are hypergraphs where all hyperedges have cardinality two.
 Returning to the majority rule model, to design a spatial analog that also accounts for the social structure of the population --
 in which discussions occur among agents that indeed belong to a common social group rather than agents chosen at random -- it is
 natural to use the framework of stochastic dynamics on hypergraphs, where each hyperedge represents a social group.
 The dynamics of the majority rule are described by the Markov generator
\begin{equation}
\label{eq:majority}
 \begin{array}{l}
    \displaystyle Lf (\eta) \ = \
    \displaystyle \sum_{h \in H} \ \ind \,\{\card \,(\eta \cap h) < (\card h) / 2 \} \ [f (\eta \setminus h) - f (\eta)] \vspace{-4pt} \\ \hspace{80pt} + \
    \displaystyle \sum_{h \in H} \ \ind \,\{\card \,(\eta \cap h) \geq (\card h) / 2 \} \ [f (\eta \cup h) - f (\eta)] \end{array}
\end{equation}
 where $\ind$ is the indicator function, which is equal to one if its argument is true and zero otherwise, and where $\card$
 stands for the cardinality.
 In particular, $\card \,(\eta \cap h)$ is the number of individuals with opinion 1 in the hyperedge $h$.
 Note that \eqref{eq:majority} simply is a particular case of \eqref{eq:generator} with
 $$ \begin{array}{rcl}
     c_A (\eta \cap h) & = & 0 \quad \hbox{for} \ A \notin \{\varnothing, h \} \vspace{4pt} \\
     c_{\varnothing} (\eta \cap h) & = & \ind \,\{\card \,(\eta \cap h) < (\card h) / 2 \} \vspace{4pt} \\
     c_h (\eta \cap h) & = & \ind \,\{\card \,(\eta \cap h) \geq (\card h) / 2 \}. \end{array} $$
 Note also that the first indicator function is equal to one, and the second one equal to zero, if and only if there is a
 strict majority of type 0 in the hyperedge $h$.
 In particular, the expression of the generator \eqref{eq:majority} means that the configuration in each hyperedge is
 updated at rate one
% , i.e, at the times of a Poisson process with parameter one,
 with all the vertices changing simultaneously their
 opinion to the majority opinion in the hyperedge.
 In case of a tie, opinion 1 is adopted.
 Since hyperedges are updated at the times of independent Poisson processes with intensity one, the probability of two overlapping
 hyperedges being updated simultaneously is equal to zero, therefore the process is well-defined on finite hypergraphs.
 Moreover, the argument of Harris \cite{harris_1972} described above to justify the existence of the voter model on infinite graphs
 also applies to the majority rule model, therefore our process is well-defined on finite hypergraphs as well.
 For simplicity, we assume from now on that the vertex set is $\Z^d$, we let $n > 1$ be a nonrandom integer, and we define the collection of
 hyperedges by
 $$ H \ = \ \{h_x : x \in \Z^d \} \quad \hbox{where} \quad h_x \ = \ x + \{0, 1, 2, \ldots, n - 1 \}^d. $$
 Each social group is thus represented by a $n \times \cdots \times n$ block on the lattice.
 Note that, even though this might look simplistic, any two vertices are connected by paths of overlapping hyperedges, which results
 in a system that exhibits spatial correlations and nontrivial dynamics.
 Figure \ref{fig:network} gives an example of realization when $d = 2$ and $n = 3$ with two consecutive updates.
 Time goes from left to right and at each step the hyperedge which is updated is framed with thick continuous lines.
 Black and white dots refer to opinion 1 and opinion 0, respectively.

\begin{figure}[t!]
\centering
\mbox{\subfigure{\epsfig{figure = 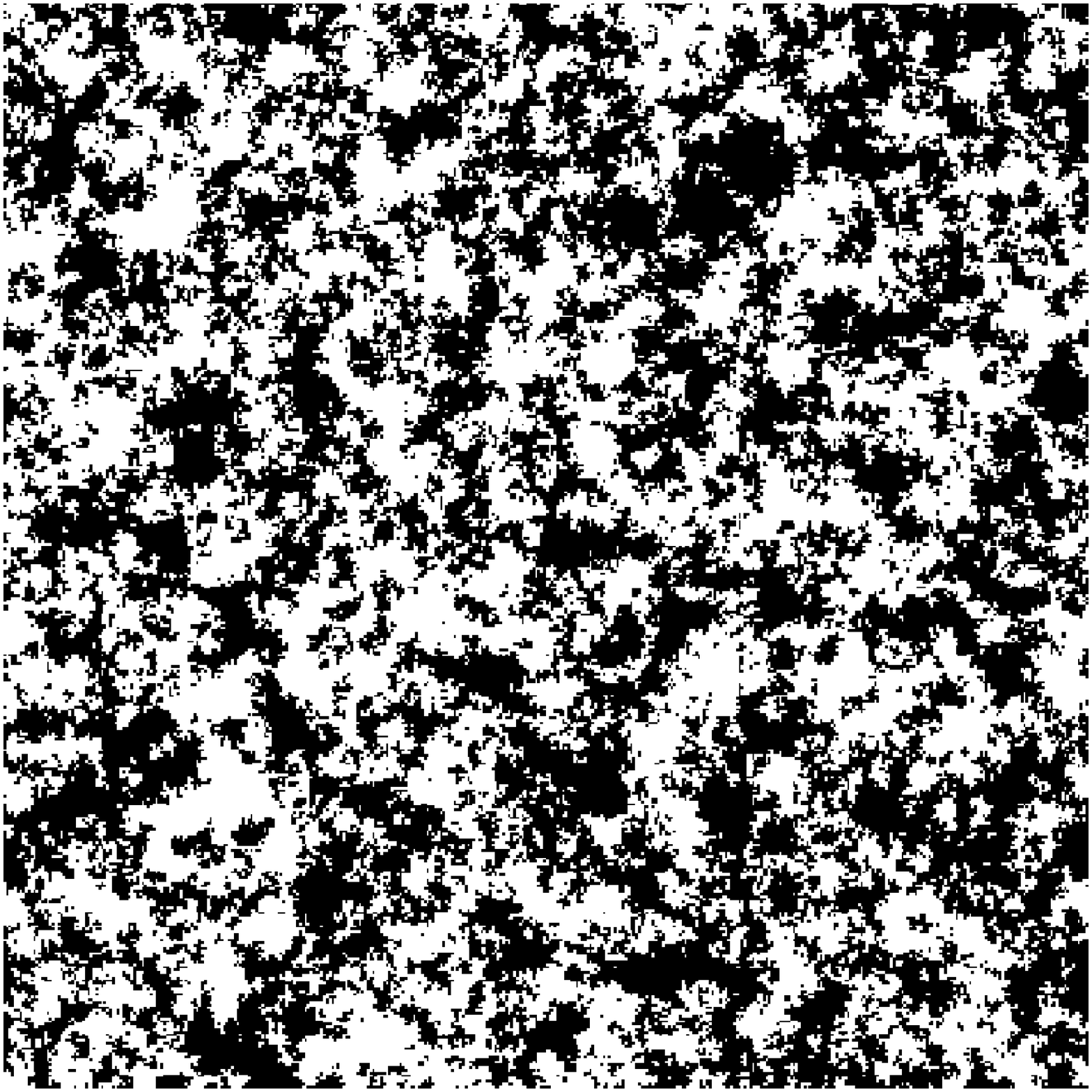, width = 220pt, height = 220pt}} \hspace{10pt}
      \subfigure{\epsfig{figure = 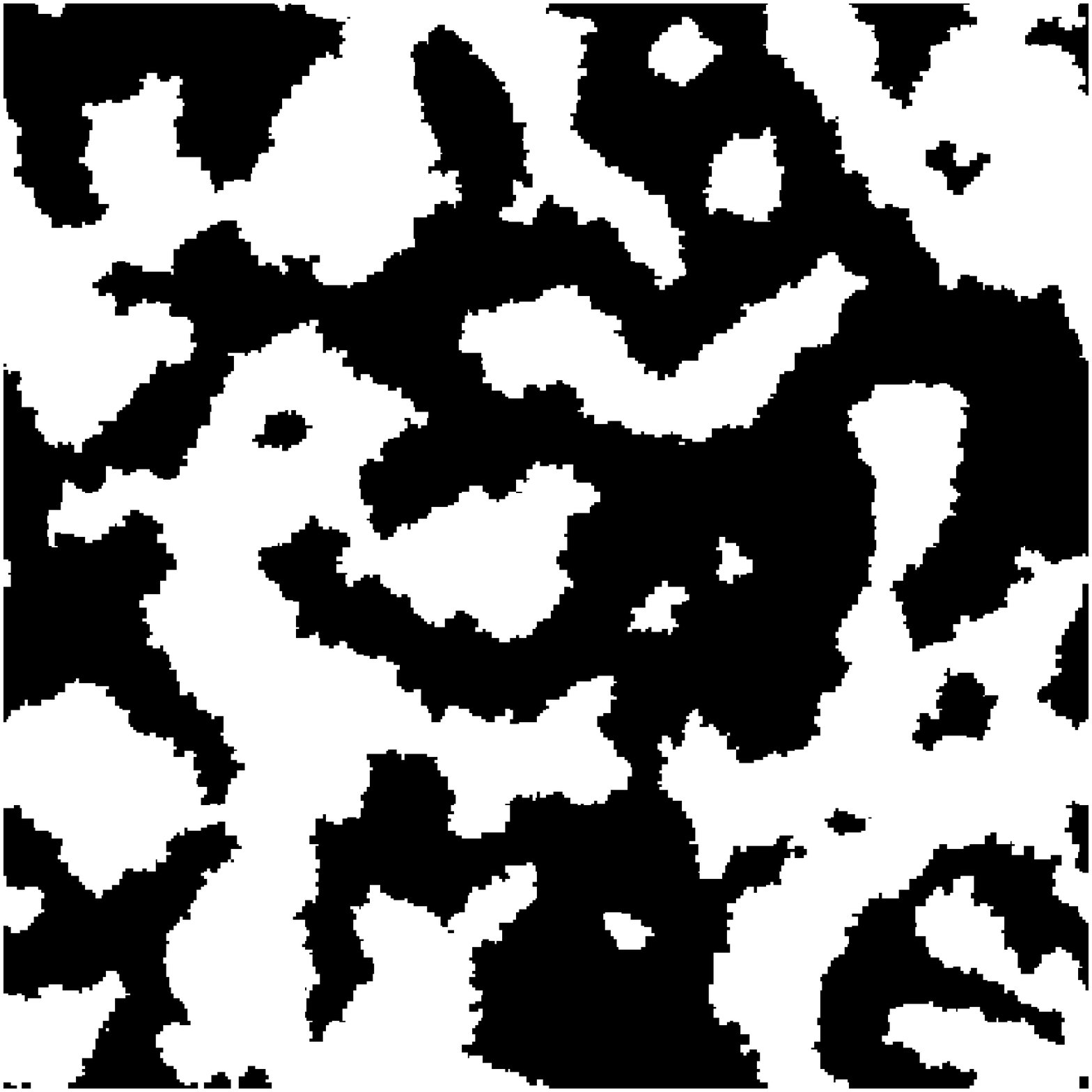, width = 220pt, height = 220pt}}}
\caption{\upshape{Simulation pictures of the voter model and majority rule model at time 20, respectively.
 Both processes evolve on a $400 \times 400$ lattice with periodic boundary conditions, and start from a Bernoulli product
 measure with an equal density of white and black vertices.}}
\label{fig:simulation}
\end{figure}

\section{The majority rule in space}
\label{sec:results}

\indent Numerical simulations of the majority rule model in one and two dimensions suggest that the asymptotic behavior of the
 process is somewhat similar to that of the voter model when $n$ is odd in the sense that the system clusters but similar to that
 of the biased voter model, i.e., the voter model modified so that individuals with opinion 0 update their opinion at a
 larger than individuals with opinion 1, when $n$ is even.
 As explained at the end of this section, our last theorem together with some heuristic arguments also suggests that, in contrast
 with the voter model, the majority rule model with $n$ odd clusters in any dimension.
 Hence, we state the following conjecture.
\begin{conjc} --
\label{co:odd-even}
 Clustering occurs when $n$ is odd, i.e., starting from any configuration,
 $$ P \,(\eta_t (x) \neq \eta_t (y)) \ \to \ 0 \ \ \hbox{as} \ \ t \to \infty \quad \hbox{for all} \ x, y \in \Z^d, $$
 whereas opinion 1 wins when $n$ is even, i.e., starting from any configuration that has infinitely many
 hyperedges with a majority of type 1 vertices,
 $$ P \,(\eta_t (x) = 0) \ \to \ 0 \ \ \hbox{as} \ \ t \to \infty \quad \hbox{for all} \ x \in \Z^d. $$
\end{conjc}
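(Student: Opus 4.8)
The plan is to build the whole analysis on the graphical representation. First I would realize the process from a family of independent rate-one Poisson clocks attached to the hyperedges $h_x$, each ring triggering the simultaneous majority update of the corresponding block, exactly as in the Harris construction invoked for well-posedness. The first payoff is \emph{attractiveness}: under the basic coupling in which two configurations $\eta \le \xi$ use the same clocks, each update preserves the ordering, since $\card (\eta \cap h) \le \card (\xi \cap h)$ forces the majority and tie-breaking decisions to be consistent. Monotonicity then lets me sandwich an arbitrary initial configuration between convenient ones and, when $n$ is even, reduces ``opinion 1 wins'' to the study of the growth of a single region of $1$'s. The second structural remark is that for $n$ odd we have $\card h_x = n^d$ odd, so no ties ever occur and the dynamics are symmetric under $0 \leftrightarrow 1$; this symmetry will be the backbone of the clustering argument.

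For the even case I would argue through the motion of the interface between $1$'s and $0$'s. In one dimension this is clean: track the right endpoint $m$ of an interval of $1$'s and note that of the $n-1$ hyperedges straddling the boundary, the tie-breaking rule sends exactly $n/2$ of them (those overlapping the $1$-block in at least $n/2$ sites) to all-$1$ and only $n/2 - 1$ to all-$0$. A short computation shows the endpoint performs a random walk with strictly positive drift $n/2$, so any finite $0$-island is squeezed between two advancing interfaces and erased in finite time; combined with the hypothesis that infinitely many majority-$1$ hyperedges are present to seed all-$1$ blocks, this yields $P (\eta_t (x) = 0) \to 0$. In general dimension I would try to promote this into a comparison with oriented percolation: show that a sufficiently large all-$1$ box, updated over a fixed time window, is likely to produce an even larger all-$1$ box, and feed this block event into a standard renormalization so that the $1$'s percolate out to any fixed site. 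An alternative, dimension-free route worth pursuing is a Toom-type eroder argument establishing that the all-$1$ configuration is a stable attractor of the biased majority rule and that finite $0$-islands shrink.

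For the odd case I would exploit the $0 \leftrightarrow 1$ symmetry and look for a Lyapunov functional measuring disagreement. The natural candidate is the density of discordant nearest-neighbor pairs: each majority update makes its block internally monochromatic, destroying all disagreements inside $h_x$ while only possibly creating new ones along the block boundary, so for large blocks the expected net change should be negative. Turning this into a genuine supermartingale on the infinite lattice, and then transferring the resulting decay of the disagreement density into the pointwise statement $P (\eta_t (x) \neq \eta_t (y)) \to 0$, is the delicate part; in $d = 1$ it can instead be read off directly, since the $0/1$ boundaries then perform symmetric, annihilating random-walk-like motions that thin out exactly as in the voter model.

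The hard part, and the reason the full statement remains a conjecture, is the passage to $d \ge 2$. There is no exact duality because the majority rule is nonlinear, so the coalescing and annihilating random-walk machinery that settles the voter model is unavailable. For even $n$ the interface is now a $(d-1)$-dimensional surface, and one must rule out pinning and fingering and control the combinatorics of which partially overlapping blocks cross the $n^d / 2$ threshold before any renormalization or eroder estimate can be closed; for odd $n$ the absence of bias removes the drift entirely, so clustering must come from a purely geometric surface-tension effect whose quantitative control in high dimensions I do not see how to make rigorous. This is precisely why I would, as the paper does, first settle $d = 1$ completely and then treat only the smallest two-dimensional cases $n = 2$ and $n = 3$, where the boundary combinatorics are still tractable by hand.
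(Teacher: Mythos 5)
Note first that the statement you were asked about is Conjecture~\ref{co:odd-even}: the paper itself does not prove it, but supports it with Theorems~\ref{th:even-1D}--\ref{th:odd-2D}, and your proposal correctly treats it the same way. Where your plan overlaps with what is actually proved, it is sound: the attractiveness of the block dynamics under the basic coupling is correct (if $\eta \subset \xi$ then $\card(\eta \cap h) \leq \card(\xi \cap h)$, so the two majority decisions are ordered); your count of the boundary hyperedges in $d = 1$ for $n$ even ($n/2$ updates advancing the rightmost $1$, only $n/2 - 1$ retreating it, giving drift $n/2$) is exactly Lemma~\ref{lem:drift}; and your renormalization plan for even $n$ in higher dimensions is the strategy the paper follows for Theorem~\ref{th:even-2D}, although there the entire difficulty is hidden in producing the block estimate, which the paper obtains through the slice process and its interface drift analysis rather than by a soft argument.

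The genuine gaps are in your treatment of the odd case. In $d = 1$, ``the boundaries perform symmetric, annihilating random-walk-like motions that thin out exactly as in the voter model'' is not a proof and glosses over precisely the obstruction the paper identifies: interfaces here are neither independent nor simple random walks, since a single hyperedge update couples up to $n$ interfaces at once, and the ancestry of a space-time point grows linearly going backwards in time. The paper's resolution is a new idea your sketch lacks: the \emph{center paths}, which individually do evolve as symmetric random walks, together with Lemma~\ref{lem:common}, which shows that once the two center paths meet, the entire space-time region $\Omega$ between them is monochromatic; recurrence then yields clustering from \emph{any} initial configuration. In $d \geq 2$ your proposed Lyapunov functional is actually false: take $n = 3$, $d = 2$ and a locally flat interface, say $\eta(x) = 1$ iff $x_2 \geq 1$; a $3 \times 3$ block straddling the interface (three $1$'s and six $0$'s, or six $1$'s and three $0$'s) destroys the $3$ discordant edges inside the block but creates $5$ new ones along its boundary, a net change of $+2$. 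Thus the discordant-pair count has strictly positive drift along flat stretches of interface and is not a supermartingale; the expected decrease you invoke simply does not hold locally. The mechanism for clustering, if it is true, is a curvature effect that is global along the boundary curve, which is exactly what Theorem~\ref{th:odd-2D} captures: for a regular cluster the drift of the number of $1$'s equals $9\,(c_- - c_+)$, and this is negative on average only because, around a Jordan curve, right turns outnumber left turns by four --- a property of the whole contour, not of any local functional.
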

 The conclusion for $n$ even can be understood intuitively by observing that each tie in a discussion group, i.e., each tie at the time of the update
 of a hyperedge, results in a set of $n^d / 2$ type 0 vertices changing their opinion to type 1, while if the process is modified so that
 ties do not affect the configuration of the system, the rules are symmetric.
 The case when the parameter $n$ is odd is more interesting:
 in contrast with the results of Cox and Griffeath \cite{cox_griffeath_1986} which indicate that there is no natural scale for the
 asymptotics of the cluster size in the two-dimensional voter model, numerical simulations suggest that spatial correlations emerge much
 faster and that interface dynamics follow motion by mean curvature.
 This behavior is somewhat reminiscent of the behavior of some threshold voter models. % with intermediate thresholds.
 Threshold voter models denote a class of stochastic processes which, similarly to the voter model, describe opinion dynamics on
 the regular lattice.
 As in the voter model, individuals are characterized by one of two possible competing opinions and update their opinion independently
 at rate one based on their neighbors' opinion.
 But unlike in the voter model, the new opinion is not chosen uniformly at random from the neighborhood.
 Instead, individuals change their opinion if and only if the number of their neighbors with the opposite opinion exceeds a parameter
 $\theta$, called the threshold.
 The majority vote model is the special case in which the threshold $\theta$ equals half of the number of neighbors, therefore vertices
 are updated individually at rate one by adopting the majority opinion of their neighborhood.
 Although this model seems to be the spin system excluding the simultaneous update of several vertices the most closely related to our spatial
 version of the majority rule model, it does not cluster: instead, fixation occurs, i.e., any given individual stops changing opinion after a
 finite random time, as proved by Durrett and Steif \cite{durrett_steif_1993}.
 In fact, the behavior of the majority rule model is similar to that of threshold voter models with threshold parameter slightly
 smaller than half of the number of neighbors.

\indent The asymptotic behavior in the one-dimensional case is fully analyzed in this paper.
 Based on random walk estimates, we first prove that opinion 1 wins when $n$ is even.
\begin{theor} --
\label{th:even-1D}
 Assume that $d = 1$ and $n$ is even. Then, opinion 1 wins.
\end{theor}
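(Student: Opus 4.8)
The plan is to combine two ingredients: the attractiveness of the process and a reduction of the dynamics near a single interface to a random walk with a strictly positive drift. First I would establish attractiveness through the basic coupling, running two copies driven by the same Poisson clock attached to each hyperedge. If $\eta \leq \xi$ coordinatewise and the clock of $h$ rings, then $\card (\eta \cap h) \leq \card (\xi \cap h)$, so whenever $h$ turns all type~1 in $\eta$ it also does so in $\xi$, and whenever it turns all type~0 in $\xi$ it does so in $\eta$; the order is therefore preserved and $\eta_t \leq \xi_t$ for all $t$. This lets me bound the process from below by more tractable configurations.

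Next I would analyze the ``staircase'' configurations consisting of type~1 on a half-line and type~0 on the complementary half-line. Only the $n-1$ hyperedges straddling the interface can move it, and a straddling hyperedge containing $j$ vertices of type~1 turns all type~1 (pushing the interface outward by $n-j$) when $j \geq n/2$ and all type~0 (pulling it inward by $j$) when $j < n/2$. Viewing the interface position as a continuous-time random walk and summing the $n-1$ contributions gives a net drift of $\tfrac{1}{2}(n/2)(n/2+1) - \tfrac{1}{2}(n/2)(n/2-1) = n/2 > 0$ directed so that the type~1 region expands. This is the ``random walk estimate'': the edge of a region of type~1 moves ballistically outward at speed $n/2$. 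The same computation applied to both edges shows that a solid interval of type~1 of width at least $n$ grows on both sides, each edge being an independent random walk with outward drift $n/2$ once the two edges no longer share a hyperedge.

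The difficulty is that a hyperedge with a majority of type~1 need not be solidly type~1, so no solid seed lies below $\eta_0$ and attractiveness cannot be applied directly at time zero; a solid block must first be created dynamically. Here I would use a race argument: for a majority-1 hyperedge $h$, among the $2n-1$ hyperedges meeting $h$ the clock of $h$ rings first with probability $1/(2n-1)$, and on that event the vertices of $h$ are still in their initial state when $h$ fires, since only hyperedges meeting $h$ can alter them; hence $h$ becomes solidly type~1. Choosing infinitely many majority-1 hyperedges with pairwise disjoint neighborhoods --- possible on at least one side of any fixed vertex, since by hypothesis infinitely many of them exist --- makes these success events independent, so by Borel--Cantelli infinitely many solid type~1 blocks are created almost surely.

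Finally, fix $x$ and pick such a solid block forming, at a random finite time, on the side of $x$ carrying infinitely many majority-1 hyperedges. By attractiveness, $\eta$ dominates from that time on the process started from this single solid interval, whose edge facing $x$ is a random walk with drift $n/2$ toward $x$ and beyond. Because this walk has a strictly positive drift it crosses the level $x$ and, again by positivity of the drift, returns to it only finitely often; hence in the dominated process $x$ is eventually type~1 forever, and therefore so is $\eta_t(x)$. Thus $\eta_t(x) \to 1$ almost surely, and bounded convergence yields $P(\eta_t(x) = 0) \to 0$. I expect the main obstacle to be precisely this coupling step --- controlling the dynamically created seed and proving that the positive-drift interface makes $x$ permanently type~1 --- rather than the drift computation itself, which is routine once the single-interface reduction is in place.
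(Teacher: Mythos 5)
Your first three steps track the paper closely: the attractiveness claim is correct (the tie rule $\geq n/2 \Rightarrow 1$ is a monotone threshold, so the basic coupling preserves order), the interface computation giving outward drift $n/2$ is exactly the paper's Lemma \ref{lem:drift}, and the race argument creating solid seeds from majority-1 hyperedges is essentially what the paper leaves implicit in its final sentence. The genuine gap is in your last paragraph. You pick a \emph{single} dynamically created solid block and assert that, almost surely, its edge facing $x$ crosses level $x$ and visits it finitely often. This treats that edge as a positive-drift random walk for all time, which it is not: the block is finite, and the half-line (staircase) analysis that produced the drift $n/2$ is only valid while the two edges of the block do not share a hyperedge. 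As soon as $n \geq 4$, a finite block can be destroyed with positive probability — e.g., starting from the solid block $h_0 = \{0,\ldots,n-1\}$, the update of $h_{n-1}$ flips its single type-1 vertex to 0, and a finite sequence of such minority updates kills the block entirely — after which there is no edge at all. So a single seed engulfs $x$ permanently only with \emph{positive} probability, not almost surely, and your conclusion $\eta_t(x) \to 1$ a.s. does not follow from one seed. (For $n = 2$ your claim happens to be true, since then type 1 can never be removed, but the theorem covers all even $n$.)

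The paper avoids exactly this trap. It bounds the single-seed success probability below by
$$ P \,(X_t - X_0 \geq 0 \ \hbox{for all} \ t \geq 0) \times P \,(X_0 - X_t \leq 0 \ \hbox{for all} \ t \geq 0) \ > \ 0, $$
that is, the event that the right edge never dips below its starting position \emph{and} the left edge never rises above its starting position. These two events are independent because they are driven by disjoint families of Poisson clocks, and on their intersection the block always contains the original hyperedge, so the two edges never interact and the half-line random-walk description stays valid; positivity of each factor follows from the positive drift. Only at that point does the hypothesis of infinitely many majority-1 hyperedges enter, to upgrade "positive probability" to "almost surely some cluster expands indefinitely." Note that your Borel--Cantelli step produces infinitely many seeds but your proof never uses more than one of them; repairing your argument requires (i) replacing your a.s. claim by the never-retreat event above, which only has positive probability per seed, and (ii) an additional argument (a restart via stopping times, or an appeal to essentially independent trials as the seeds are taken farther and farther out) showing that almost surely at least one of the infinitely many seeds succeeds, whose expanding cluster then covers $x$ for all large times.
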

 To study the process when $n$ is odd, we rely on duality techniques following the standard approach introduced for the voter model.
 In the case of the majority rule model, the set of vertices that one has to keep track to determine the opinion of a space-time
 point grows linearly going backwards in time, which is the main difficulty to establish clustering.
 The trick is to prove that the correlation between two space-time points only depends on a space-time region which is delimited by
 a semblance of the centers of the corresponding dual processes.
 Recurrence of symmetric random walks, which is the key to proving clustering of the voter model in one and two dimensions, is
 invoked in order to show that this space-time region is almost surely bounded.
\begin{theor} --
\label{th:odd-1D}
 Assume that $d = 1$ and $n$ is odd. Then, the process clusters.
\end{theor}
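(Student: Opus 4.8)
The plan is to translate clustering of $\eta_t$ into the statement that an associated \emph{interface process} thins out locally, and to analyze that interface process directly rather than carrying the full backward dual. For a configuration $\eta$ I define the boundary variables $\xi(i) = \ind \{\eta(i) \neq \eta(i+1)\}$ for $i \in \Z$, so a boundary sits at $i$ exactly when the opinion changes between $i$ and $i+1$. Since $\eta_t(x) \neq \eta_t(y)$ if and only if there is an odd number of boundaries among the sites $x, x+1, \ldots, y-1$, one has $P(\eta_t(x) \neq \eta_t(y)) \leq P(\xi_t(i) = 1 \text{ for some } x \leq i < y)$, so it suffices to show that the local density of boundaries vanishes as $t \to \infty$. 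The first step is to observe that, because an update of a block replaces its content by a \emph{single} value, whether the leftmost site of a block agrees with the block's majority depends only on the pattern of boundaries inside the block (it is invariant under the global flip $\eta \mapsto 1 - \eta$). Hence $(\xi_t)$ is itself an autonomous Markov process, a system of interacting particles on $\Z$, and the problem reduces to showing that this particle system clusters.

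The second step, which is the structural core of the argument, is a monotonicity lemma: the total number of boundaries is almost surely non-increasing in time. Updating the block $h_a = a + \{0, \ldots, n-1\}$ can only alter the $n+1$ boundary slots $a-1, a, \ldots, a+n-1$; after the update the $n-1$ interior slots are all vacated (the block is constant), so at most the two edge slots $a-1$ and $a+n-1$ can remain occupied. A short case analysis on the number of boundaries present in this window before the update then shows that the count never increases: when two or more boundaries are present the post-update count is at most two, and in the delicate case of a single boundary in the window the oddness of $n$ rules out a tie, so the majority is unambiguous and the lone boundary is merely \emph{translated} to one edge of the block rather than duplicated. Thus boundaries are never created, and every update either leaves the count unchanged (a pure displacement) or strictly decreases it (an annihilation).

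The third step identifies the motion of an isolated boundary. If a single boundary separates a half-line of $0$'s from a half-line of $1$'s, the only blocks that can move it are the $n-1$ blocks straddling it; computing the displacements shows that the boundary jumps by $\pm 1, \pm 2, \ldots, \pm (n-1)/2$, each at rate one, so it performs a mean-zero, finite-range, symmetric continuous-time random walk. Here the left--right reflection symmetry of the block structure together with $n$ odd (which balances the number of blocks voting each way) is exactly what forces the zero drift, and symmetric random walks are recurrent in dimension one. This recurrent driftless walk is the ``center'' whose dynamics govern the correlations, in the language suggested for the dual approach; working with the interface process lets us track it directly and avoid the linearly growing backward dependency set.

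The final step combines these facts in the manner of the classical theory of annihilating random walks. Boundaries are never created, move as recurrent symmetric random walks when isolated, and annihilate in pairs: whenever two boundaries come within range $n-1$ of each other the block covering both is updated at rate one and can remove them. Recurrence guarantees that any two neighboring boundaries come within interaction range infinitely often, and each encounter carries a uniformly positive chance of annihilation, so the boundary density should tend to zero and clustering follow. \textbf{The main obstacle} is precisely this last reduction: the boundary particles have range $(n-1)/2$, their increments are \emph{not} independent once the particles are close, and the initial configuration is arbitrary (hence possibly of full density), so one cannot simply quote the nearest-neighbor annihilating random walk result. The work lies in a comparison or direct recurrence argument showing that nearby particles annihilate fast enough -- via the recurrence of the relevant symmetric random walk -- that the occupation probability of any fixed site decays to zero, and establishing this decay uniformly over all initial configurations is the delicate point.
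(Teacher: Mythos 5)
Your first three steps are sound, and two of them are genuinely nontrivial observations: the interface process $\xi_t(i) = \ind\{\eta_t(i) \neq \eta_t(i+1)\}$ is indeed an autonomous Markov system, an update never increases the number of boundaries (oddness of $n$, which excludes ties, is exactly what prevents a lone interior boundary from being duplicated), and an isolated boundary performs a symmetric walk with jumps $\pm 1, \ldots, \pm (n-1)/2$, each at rate one. That walk is in substance the same object as the ``center path'' on which the paper's proof runs. The gap is the final step, and you have named it yourself: nothing in the proposal actually proves that the local density of boundaries tends to zero, and this is not a finishing detail but the actual content of the theorem. Two concrete obstructions. First, the pairwise interaction is not benign: if two consecutive boundaries sit at slots $i < j$ and a block containing both is updated, they annihilate only when $j - i \leq (n-1)/2$; when $(n+1)/2 \leq j - i \leq n-2$ the middle segment is the local minority of neither side, or rather can be the local majority, and the same update pushes the two boundaries apart to distance $n$. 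So inside the interaction zone the pair distance is not a symmetric walk, and ``each encounter carries a uniformly positive chance of annihilation'' needs a real argument. Second, and more fundamentally, the theorem concerns an arbitrary initial configuration, hence possibly infinitely many unlabelled particles: a boundary's intended partner may annihilate with a third particle, increments of nearby particles are dependent, and no soft limiting procedure upgrades a two-particle recurrence statement to local extinction for the infinite system. Monotonicity of the particle count gives nothing here; a configuration of widely spaced boundaries keeps its count forever, yet clustering requires those boundaries to locally disappear.

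It is worth seeing how the paper escapes exactly this trap, because it supplies the one idea your plan is missing: a device reducing the infinite-particle forward problem to a statement about \emph{two} random walks. Going backward in time from $(x,T)$ and $(y,T)$, the paper defines ``center paths'' that jump to the center of any updating hyperedge covering their current position; these evolve as symmetric walks (the same walk as your isolated boundary) and coalesce when a common hyperedge fires, so by one-dimensional recurrence they meet before time $0$ with probability tending to one as $T \to \infty$. The key step, Lemma \ref{lem:common}, then shows by a forward-in-time majority argument that on this event the entire space-time region between the two center paths is monochromatic, so in particular $\eta_T(x) = \eta_T(y)$. No control of the full interface system is ever needed. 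If you wish to keep your forward annihilating-particle picture, you must build an analogous reduction -- for each fixed pair $x < y$, a coupling with a tractable two-particle system whose meeting forces $\eta_t(x) = \eta_t(y)$ -- and until that is done the last paragraph of your proposal is a heuristic rather than a proof.
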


\indent The two-dimensional system is more difficult to study mainly because of the underlying hypergraph structure and the lack of
 mathematical tools in this context.
 Even though duality techniques are available in two dimensions as well, the dual process is hardly tractable due to an abundance
 of branching events.
 For simplicity, we mainly focus on the cases when $n = 2$ and $n = 3$ rather than the more general even/odd dichotomy.

\begin{figure}[t!]
\centering
\includegraphics[width=400pt]{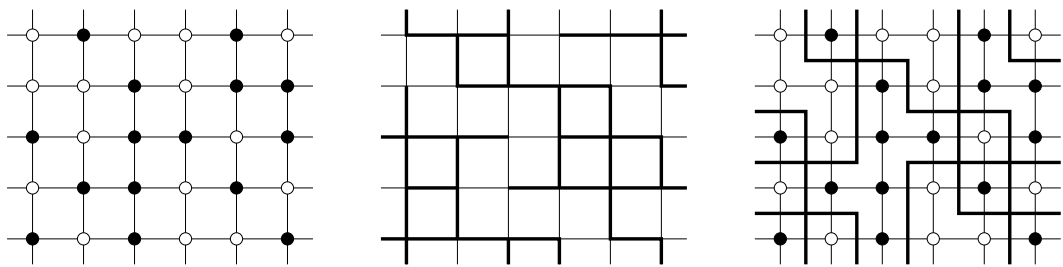}
\caption{\upshape{Dual representation between the majority rule model and the contour process}}
\label{fig:contour}
\end{figure}

\indent One key to analyzing the two-dimensional system is to look at a dual representation of the system that consists of keeping track
 of the disagreements along the edges of the lattice rather than the actual opinion at each vertex.
 This approach is partly motivated by the fact that the two-dimensional lattice seen as a planar graph is self dual.
 More precisely, we introduce a spin system coupled with the process and defined on the edge set by
 $$ \xi_t (e) \ = \ \xi_t ((x, y)) \ = \ \ind \,\{\eta_t (x) \neq \eta_t (y) \} \quad \hbox{for each edge} \ e = (x, y). $$
 To visualize the state space of this process, it is convenient to delete all the edges in state 0 and rotate all the edges in state 1
 of a quarter turn as shown in Figure \ref{fig:contour}.
 Motivated by the resulting picture, we call this process the \emph{contour} associated with the original spin system.
 This representation can as well be obtained by replacing every vertex $x \in \eta$ by the unit square centered at $x$ and taking the
 topological boundary of the union of these squares.
 We point out that the contour associated with a configuration $\eta$ can also be seen as a random subgraph of the dual lattice
 $$ \D^2 \ := \ \bigg\{x + \bigg(\frac{1}{2}, \frac{1}{2} \bigg) : x \in \Z^2 \bigg\}. $$
 Following the terminology of percolation theory, we say that an edge of the dual lattice is open if it belongs to the contour, and
 closed if it does not belong to the contour.

\indent To prove invasion of type 1 when $n = 2$, the key is to look at the majority rule model modified to have only type 0 outside a
 horizontal slice of height three and so that all vertices to the right of a type 0 also are in state 0.
 The reason for looking at such a process is that the profile of its contour can be simply characterized by a two-dimensional vector
 that keeps track of the distance between the rightmost type 1 vertices at all three levels.
 Moreover, the evolution rules of the distance between these vertices as well as the rate at which the vertices of
 type 1 are added to or removed from the system can be expressed in a simple manner based on certain geometric properties of the contour.
 Relying in addition on a block construction leads to the following result.

\begin{theor} --
\label{th:even-2D}
 Assume that $d = 2$ and $n = 2$. Then, opinion 1 wins.
\end{theor}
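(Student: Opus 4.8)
The plan is to show that a finite seed of type-1 vertices expands at a positive linear speed and then to upgrade this local growth statement into the global claim through a renormalization (block construction) argument. The first ingredient I would establish is that the majority rule model with $n = 2$ is attractive: if $\eta \subset \zeta$ then $\card (\eta \cap h) \leq \card (\zeta \cap h)$ for every hyperedge $h$, so whenever $\eta$ turns a block entirely to type 1 so does $\zeta$, while whenever $\zeta$ turns a block entirely to type 0 so does $\eta$. Hence the two indicator rates in \eqref{eq:majority} are monotone and the basic coupling preserves the partial order. This is what allows me to replace the true initial configuration by a cleaner dominated one and to argue by comparison.

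Second, following the contour representation introduced above, I would reduce the analysis to the \emph{slice front process}: the configuration that is type 0 everywhere outside a fixed horizontal strip of height three, and inside the strip is monotone along each row, i.e.\ in each of the three rows there is a rightmost type-1 vertex with everything to its left in state 1 and everything to its right in state 0. The virtue of this reduction is that the contour of such a configuration is a staircase whose shape is entirely encoded by the three front positions, and by translation invariance it suffices to track the two gaps between consecutive fronts, giving a two-dimensional vector $(a,b) \in \Z^2$. By attractiveness, a lower bound on the speed of this dominated front is a lower bound on the speed of invasion in the real process.

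Third, I would compute the transition rates of the pair $(a,b)$ together with the displacement of the whole front. Only the $2 \times 2$ blocks straddling the front can alter the state, and for each local front configuration the majority rule determines deterministically whether a column of type-1 vertices is added (the front advances) or removed (the front retreats) and how the relative offsets of the three rows change; crucially, because of the tie-breaking bias, a $2 \times 2$ block split evenly across the front is resolved in favor of type 1. Summing these rates, I would show that the front has a strictly positive mean rightward speed while $(a,b)$ performs a positive-recurrent, mean-reverting motion, so that the three fronts advance coherently and the type-1 strip grows linearly in time with overwhelming probability.

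Finally, I would run the block construction. Using the linear-growth estimate for the slice front, I would show that from a sufficiently large rectangle of type-1 vertices the type-1 region fills, within a fixed time and with high probability, a suitable translate of that rectangle in each lattice direction; the standard comparison with supercritical oriented percolation then propagates type 1 throughout $\Z^2$. Combined with attractiveness and the hypothesis that the initial configuration contains infinitely many blocks with a majority of type 1 --- each of which is instantly resolved into a $2 \times 2$ type-1 seed --- this yields $P (\eta_t (x) = 0) \to 0$ for every $x$. The main obstacle I anticipate is the third step: controlling the joint law of the offset vector $(a,b)$ and the front displacement, and in particular proving that $(a,b)$ stays bounded rather than the three levels tearing apart, since it is precisely this coherence that guarantees a genuine two-dimensional invasion instead of three decoupled one-dimensional fronts. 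The renormalization in the last step is technically standard but requires the speed and coherence estimates to be uniform enough to feed into the oriented-percolation comparison.
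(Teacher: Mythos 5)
Your proposal follows essentially the same route as the paper's proof: the paper likewise restricts attention to a height-three horizontal slice with monotone rows (its \emph{slice process}), encodes the configuration by the three rightmost type-1 positions, shows the sum of the fronts has strictly positive drift while the gap/interface process remains bounded, and then feeds these large-deviation estimates into a block construction compared with supercritical oriented site percolation. The only points you gloss over, which the paper handles explicitly, are (i) that the slice dynamics must be modified by suppressing any update that would create a 1 to the right of a 0 in the same row, since otherwise the monotone-row class is not preserved (this suppression also keeps the process stochastically dominated by the true model), and (ii) that the percolation comparison by itself only gives survival with positive density, so an additional lack-of-percolation argument for the dry sites is needed to conclude that opinion 1 actually wins.
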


\indent Finally, we look at the two-dimensional majority rule when the set of hyperedges consists of the set of
 all three by three squares as a test model to understand the general case when $n$ is odd.
 To motivate our last result, consider the traditional voter model starting with a finite number of vertices of type~1.
 The process that keeps track of the number of type 1 vertices is a martingale since each time two vertices in different states interact,
 both vertices are equally likely to flip.
 In particular, the expected number of type 1 vertices is preserved by the dynamics, though the martingale convergence theorem implies
 almost sure extinction of the type 1 vertices.
%  The fact that the number of type 1 vertices stays constant in average is expected for neutral spin systems in general, where neutral
%  means that switching the state of all vertices does not modify the rate at which a given vertex flips.
 One of the most interesting aspects of the majority rule model, which again is reminiscent of threshold voter models with appropriate
 threshold parameter, is that, when starting from a finite initial configuration, the expected number of type 1 vertices is not constant.
 Our last theorem gives, for a class of configurations that we call regular clusters, an explicit expression of the variation rate of
 the number of type 1 as a function of the geometry of the cluster.
 This result supplemented with a heuristic argument strongly suggests that, as for some threshold voter models, the process with $n$
 odd clusters and explains the reason why the snapshot of the majority rule model on the right hand side of Figure \ref{fig:simulation}
 differs significantly from that of the voter model on the left hand side.
 To state our result, we need a few more definitions:
 given a configuration $\eta$, we call vertex $x$ a \emph{corner} whenever
 $$ \eta (x - e_1 - e_2) = \eta (x + e_1 + e_2) \neq \eta (x) \quad \hbox{or} \quad \eta (x - e_1 + e_2) = \eta (x + e_1 - e_2) \neq \eta (x) $$
 where $e_1$ and $e_2$ are the first and second unit vectors of the Euclidean plane.
 A corner $x$ is said to be a \emph{positive corner} if $\eta (x) = 1$ and a \emph{negative corner} if $\eta (x) = 0$.
 Also, we call $\eta$ a \emph{cluster} if its contour $\Gamma$ is a Jordan curve, i.e, a non-self-intersecting loop, and
 a \emph{regular cluster} if in addition
%  To define regular clusters, we set
%  $$ D_1 \ = \ \bigg[- \frac{1}{2}, \frac{1}{2} \bigg]^2 \quad \hbox{and} \quad D_2 \ = \ \bigg[- \frac{1}{2}, \frac{3}{2} \bigg]^2 $$
%  and for all $\eta : \Z^2 \longrightarrow \{0, 1 \}$, denote by $\Gamma = \Gamma (\eta)$ the topological boundary of
%  $$ \bigcup_{x \in \Z^2} \ \{x + D_1 : \eta (x) = 1 \}. $$
%  We call configuration $\eta$ a cluster if the boundary $\Gamma$ is a Jordan curve, and a regular cluster if in addition the
%  following two properties hold:
\begin{enumerate}
 \item the set $(x + D_2) \cap \Gamma$ is connected for all $x \in \D^2$ and \vspace{4pt}
 \item if vertex $x$ is a corner and $(x + D_2) \cap (y + D_2) \neq \varnothing$ then vertex $y$ is not a corner
\end{enumerate}
 where $D_2 = [- 1, 1]^2$.
 Without loss of generality, we assume that vertices located in the bounded region delimited by the Jordan curve are of type 1, which
 forces vertices in the unbounded region to be of type 0.
 Condition 1 above essentially says that the microscopic structure of the boundary of the cluster is not too complicated, i.e.,
 the Jordan curve does not zigzag too much, while condition~2 simply indicates that corners cannot be too close to each other.
 Finally, we let $c_+$ and $c_-$ denote the number of positive and negative corners, respectively, and let $\phi (\eta, x)$
 denote the variation of the number of type 1 vertices after the three by three square centered at $x$ is updated.
 Note that, when configuration $\eta$ is a cluster, $\phi (\eta, x) = 0$ for all but a finite number of vertices.

\begin{theor} --
\label{th:odd-2D}
 Assume that $\eta$ is a regular cluster with at least 11 vertices. Then
 $$ \sum_{x \in \Z^2} \ \phi (\eta, x) \ = \ 9 \,(c_- - c_+). $$
\end{theor}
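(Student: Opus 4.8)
The plan is to convert the statement into a local counting identity and then read off the right-hand side from the geometry of the contour. Since $n = 3$ and $d = 2$, every hyperedge carries $9$ vertices and the majority rule has no ties: writing $N(x) = \card(\eta \cap (x + \{-1,0,1\}^2))$ for the number of type $1$ vertices in the $3 \times 3$ square centered at $x$, an update of that square turns all $9$ vertices into type $1$ when $N(x) \geq 5$ and into type $0$ when $N(x) \leq 4$, so that
$$ \phi (\eta, x) \ = \ 9 \, \ind \{N(x) \geq 5 \} - N(x). $$
First I would sum this over $x \in \Z^2$. The term $\sum_x N(x)$ is handled by exchanging the order of summation: each type $1$ vertex lies in exactly $9$ of the squares, so $\sum_x N(x) = 9M$ where $M$ is the (finite) number of type $1$ vertices, i.e. the number of lattice points enclosed by the Jordan curve $\Gamma$. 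Setting $M' = \card \{x : N(x) \geq 5 \}$, this gives $\sum_x \phi (\eta, x) = 9 (M' - M)$, and it remains to prove $M' - M = c_- - c_+$.

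The second step splits $M' - M$ according to the type of $x$. Since $M' - M = \sum_x (\ind \{N(x) \geq 5 \} - \eta (x))$, the only vertices that contribute are those where the local majority disagrees with the vertex itself, whence
$$ M' - M \ = \ \card \{x : \eta (x) = 0, \ N(x) \geq 5 \} - \card \{x : \eta (x) = 1, \ N(x) \leq 4 \} \ =: \ Q - P. $$
The heart of the argument is the identification $P = c_+$ and $Q = c_-$: the type $1$ vertices whose surrounding $3 \times 3$ square is in the minority are exactly the positive corners, and dually for $Q$. The plan here is a local classification. By regularity condition~1 the contour meets any square $x + D_2$ in a single arc, which cuts the $9$ vertices into one type $1$ piece (containing $x$ when $\eta (x) = 1$) and one type $0$ piece; by regularity condition~2 a $3 \times 3$ square contains at most one corner. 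Enumerating the admissible local pictures -- a straight interface versus a single convex turn -- one checks that a type $1$ vertex has $N(x) \leq 4$ precisely when the interface forms a convex corner at $x$, in which case $N(x) = 4$, and that this is exactly the diagonal condition defining a positive corner; the symmetric computation gives $N(x) = 5$ at negative corners. This yields $P = c_+$ and $Q = c_-$, hence $\sum_x \phi (\eta, x) = 9 (Q - P) = 9 (c_- - c_+)$.

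The main obstacle is precisely this local classification, and it is where the regularity hypotheses and the bound of at least $11$ vertices do all the work. Without condition~1 a $3 \times 3$ square could meet two disjoint arcs of $\Gamma$ -- for instance at the tip of a one-vertex-wide protrusion, or at the center of a plus-shaped region -- producing a vertex that satisfies the diagonal corner condition yet has $N(x) = 5$, so that $P \neq c_+$; condition~1 rules out such thin features and forces the interface to be locally two-thick. Without condition~2 two corners could share a square and their contributions to $N(x)$ would interfere, again breaking the exact value $N(x) = 4$ (respectively $5$). The assumption of at least $11$ vertices guarantees the cluster is large enough that these degenerate overlaps genuinely cannot occur, so the enumeration is exhaustive and each corner is counted exactly once. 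I would therefore devote most of the write-up to listing the admissible local configurations and verifying, in each, the equivalence between the diagonal corner condition and the value of $N(x)$.
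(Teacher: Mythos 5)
Your overall strategy is correct, and it is genuinely different from the paper's. The paper first proves a thickness lemma (Lemma \ref{lem:tictactoe}: disagreeing neighbors force three-in-a-row of each type on both sides), then settles the corner-free case by partitioning the support of $\phi (\eta, \cdot \,)$ into blocks along the oriented contour, each block summing to zero (Lemma \ref{lem:neutral}), and finally reduces the general case to the corner-free one by switching the state of every corner, each switch shifting the total by $\mp 9$ (Lemma \ref{lem:corner}). You instead get the global bookkeeping for free: the identity $\phi (\eta, x) = 9 \, \ind \{N(x) \geq 5 \} - N(x)$, the double count $\sum_x N(x) = 9M$, and the split $M' - M = Q - P$ are exact and elementary, so the whole theorem collapses to the local dictionary $P = c_+$, $Q = c_-$. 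This bypasses the contour-partition and corner-flipping constructions entirely, and the factor $9$ emerges transparently (each vertex lies in nine squares) rather than from case-by-case values of $\phi$. What you do not escape is the geometric input. The non-corner half of your dictionary actually follows cleanly from the thickness lemma alone: for a type-$0$ non-corner vertex, the thickness lemma forbids opposite nearest neighbors from both being of type $1$, and non-cornerness forbids both ends of a diagonal from being of type $1$, so $N(x) \leq 2 + 2 = 4$; symmetrically $N(x) \geq 5$ at type-$1$ non-corner vertices. The corner half ($N = 4$ at positive, $N = 5$ at negative corners), however, genuinely requires the rigid $5 \times 5$ picture established in the paper's last lemma: one can write down local patterns consistent with the thickness lemma in which two positive corners are adjacent and both have $N(x) \geq 5$, so condition (R2) -- whose role you correctly flag -- is doing real work there, and this part of your enumeration is comparable in effort to the paper's own corner analysis.

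Two imprecisions should be repaired in a full write-up. First, condition (R1) controls the $2 \times 2$ windows $x + D_2$ centered at \emph{dual} points, not the $3 \times 3$ window of vertices; for instance, a width-three neck is (R1)-admissible and its contour meets the $3 \times 3$ window in two arcs, so your claim that the contour cuts the nine vertices into one piece of each type is false as stated (harmlessly so in that example, since all nine vertices then share the same type, but the argument cannot be phrased that way). Second, the admissible corner-free interfaces are not only ``straight versus a single convex turn'': regular clusters also allow diagonal and width-two staircases -- exactly the alternating paths that the paper's Lemma \ref{lem:neutral} must handle -- and your enumeration needs to include them; they do satisfy the dictionary (e.g., on a width-two staircase one finds $N = 5$ at boundary type-$1$ vertices and $N = 4$ at adjacent type-$0$ vertices, agreement in both cases), but the check is not automatic.
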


\begin{figure}[t]
\centering
\includegraphics[width=400pt]{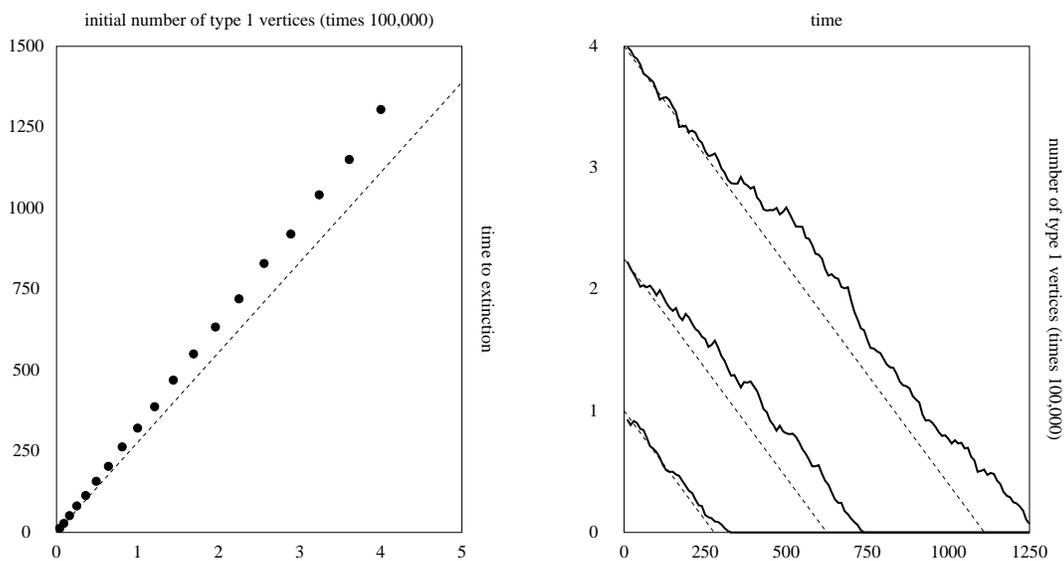}
\caption{\upshape{Simulation results for the process starting from a square cluster.
 The dots on the left picture give the time to extinction of type 1 vertices as a function of the initial number of type 1 averaged over
 100 realizations.
 The right picture gives the evolution of the number of type 1 vertices for three realizations.
 In both pictures, the dashed lines represent the corresponding expected values assuming a loss of 36 vertices of type 1 per unit of time.}}
\label{fig:extinction}
\end{figure}

\noindent In other words, the rate of variation of the number of type 1 vertices can be easily expressed as a function of the number
 of positive and negative corners, which directly implies that the expected number of type 1 vertices is not constant.

\indent To conclude this section, we give some heuristic arguments which, together with the previous theorem, supports the first part
 of Conjecture \ref{co:odd-even} when $n$ is odd.
 The main purpose of Theorem \ref{th:odd-2D} is to support the idea that the time to extinction of a finite cluster scales like the
 original size of the cluster.
 To this extent, the assumption that the cluster must have at least 11 vertices is not a limitation since clusters with an even smaller
 size are destroyed quickly.
 To relate the theorem to the time to extinction of the type 1 opinion, we first observe that, traveling around the Jordan curve clockwise,
 the number of right turns is equal to the number of left turns plus four.
 The result directly follows by using a simple induction over the number of type 1 vertices.
 This suggests that, when averaged over time from time 0 to the time to extinction, the difference between the number of
 negative and positive corners should be about $-4$, further suggesting that the time to extinction is equal to about the initial
 number of type 1 vertices divided by $9 \times 4 = 36$.
 Figure \ref{fig:extinction} compares our speculative argument with simulation results for the process starting with a square cluster.
 Even though these do not fit perfectly, the numerical results strongly suggest that the time to extinction is indeed linear
 in the initial number of type 1 vertices, % when starting from a square cluster,
 which drastically contrasts with the voter model, and that our 36 is not far from the truth.
 This heuristic argument also indicates that the majority rule dynamics quickly destroy small clusters, thus resulting in a
 clustering more pronounced than in the voter model, which explains the striking difference between the two pictures
 of Figure \ref{fig:simulation}.
 Finally, we point out that the intuitive ideas behind the proof of Theorem \ref{th:odd-2D} are not sensitive to the
 spatial dimension.
 Also, we conjecture that the theorem holds in higher dimensions with the constant 9 replaced by $3^d$, and that the
 majority rule model with $n$ odd clusters in any spatial dimensions, as mentioned at the beginning of this section
 in Conjecture \ref{co:odd-even}.

%%%%%%%%%%%%%%%%%%%%%%%%%%%%%%%%%%%%%%%%%%%%%%%%%%%%%%%%%%%%%%%%%%%%%%%%%%%%%%%%%%%%%%%%%%%%%%%%%%%%%%%%%%%%%%%%%%%%%%%%%%%%%%%%%%%%%%%%%%

\section{Proof of Theorems \ref{th:even-1D} and \ref{th:odd-1D} ($d = 1$)}
\label{sec:even-1D}

\indent This section is devoted to the analysis of the one-dimensional majority rule model for which we prove that clustering occurs
 for all sizes $n$ of the hyperedges while opinion 1 wins under the additional assumption that $n$ is even.
 The latter is based on simple random walk estimates while the former further relies on duality techniques, which consists of keeping
 track of the ancestry of a finite number of vertices going backwards in time.

\indent In order to define the dual process, the first step is to construct the process graphically.
 The construction is similar in any spatial dimension.
 To each hyperedge $h_x$ we attach a Poisson process with parameter 1 whose $j$th arrival time is denoted by $T_j (x)$.
 Poisson processes attached to different hyperedges are independent.
 At time $t = T_j (x)$, we have the following alternative:
\begin{enumerate}
 \item If $\card \,(\eta_{t-} \cap h_x) < n^d / 2$ then all vertices in $h_x$ become of type 0. \vspace{4pt}
 \item If $\card \,(\eta_{t-} \cap h_x) \geq n^d / 2$ then all vertices in $h_x$ become of type 1.
\end{enumerate}
 Results due to Harris \cite{harris_1972} which apply to traditional interacting particle systems on lattices but extend directly to
 the hypergraph $\mathbb H$ guarantee that the majority rule model starting from any initial configuration $\eta_0 \subset \Z^d$ can
 be constructed using the collection of independent Poisson processes and the majority rule at the arrival times introduced above.
 To visualize this in one dimension, we draw a line segment from vertex $x$ to vertex $x + n - 1$ at the arrival times of the Poisson
 process attached to the hyperedge $h_x$.
 Note that this line segment connects all the vertices in $h_x$.
 To study the process when $n$ is even, we first assume that $\eta_0 = (- \infty, 0] \cap \Z$. Therefore
 $$ \eta_t \ = \ (- \infty, X_t] \,\cap \,\Z \ \ \hbox{for all} \ t > 0 \quad \hbox{where} \quad  X_t \ := \ \max \,\{x \in \Z : x \in \eta_t \}. $$
 The key to proving Theorem \ref{th:even-1D} is the following lemma.
\begin{lemma} --
\label{lem:drift}
 With probability one, $X_t \to \infty$ as $t \to \infty$.
\end{lemma}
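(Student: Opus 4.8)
The plan is to observe that, starting from the step configuration $\eta_0 = (-\infty, 0] \cap \Z$, the majority rule dynamics keep the configuration of the form $(-\infty, m] \cap \Z$, so that the interface position $X_t$ is itself a Markov process; to argue that this process is in fact a continuous-time random walk with bounded, independent increments and strictly positive drift; and to invoke the strong law of large numbers to conclude that $X_t \to \infty$.

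First I would verify that step configurations are preserved and read off the jump rates of $X_t$. When the interface sits at $m$, a block that lies entirely to the left or entirely to the right of $m$ is monochromatic, so its update leaves the configuration unchanged; the only blocks that can alter the state are the $n$ blocks straddling the interface, namely the $h_x$ with $m - n + 1 \le x \le m$. Such a block $h_x$ contains exactly $k = m - x + 1$ vertices of type $1$, with $k$ running over $\{1, 2, \dots, n\}$ as $x$ decreases from $m$ to $m - n + 1$. Applying the majority rule, if $k \ge n/2$ the whole block becomes type $1$ and the interface moves to $x + n - 1 = m + (n - k)$, whereas if $k < n/2$ the whole block becomes type $0$ and the interface moves to $x - 1 = m - k$; in either case the new configuration is again a step, which both justifies the representation $\eta_t = (-\infty, X_t] \cap \Z$ and shows that $X_t$ is Markov. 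Because the graphical construction is translation invariant and each block carries a rate-$1$ clock, these rates do not depend on $m$, so $X_t$ is a genuine random walk: at total rate $n$ it performs a jump equal to one of the displacements $\{-k : 1 \le k \le n/2 - 1\} \cup \{n - k : n/2 \le k \le n\}$, each selected with probability $1/n$.

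Next I would compute the drift. The tie case $k = n/2$, which occurs precisely because $n$ is even, contributes a jump of $+n/2$, while the remaining displacements cancel in pairs: the losses $-(1 + \cdots + (n/2 - 1))$ exactly offset the gains $1 + \cdots + (n/2 - 1)$ coming from $k = n/2 + 1, \dots, n - 1$, and $k = n$ gives no displacement. Hence the mean displacement per jump is $(n/2)/n = 1/2$, and the asymptotic drift of $X_t$ equals $n \times 1/2 = n/2 > 0$. This makes the earlier heuristic precise: all of the drift is produced by the tie-breaking in favor of type $1$, and if ties left the configuration unchanged the walk would be symmetric.

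Finally, since $X_t$ is a compound Poisson process whose increments are bounded (hence integrable) with mean $1/2$ at jump rate $n$, the strong law of large numbers for such processes yields $X_t / t \to n/2$ almost surely, and in particular $X_t \to \infty$. I do not expect a serious obstacle here: the only points that require care are confirming that the step structure is genuinely invariant so that $X_t$ inherits the Markov property and becomes a bona fide random walk, and treating the tie term correctly, since it is exactly this term that renders the drift positive.
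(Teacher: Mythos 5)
Your proof is correct and follows essentially the same route as the paper's: identify the $n$ hyperedges straddling the interface, read off the transition rates of $X_t$ (jumps $1-\frac{n}{2},\ldots,\frac{n}{2}$ each at rate one), observe that the non-tie displacements cancel in pairs so the drift equals $\frac{n}{2}>0$ coming entirely from the tie rule, and conclude by the law of large numbers. The only difference is that you spell out the preservation of step configurations, which the paper asserts without proof just before the lemma; this is a welcome but minor addition.
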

\begin{proof}
 Note that there are exactly $n$ hyperedges that contain vertex $X_t$ therefore $n - 1$ possible events that affect the position
 of the rightmost 1.
 From the leftmost to the rightmost, these updates create/remove respectively the following numbers of type 1 vertices:
 $$ \hbox{create} \ \ 1, \ 2, \ \ldots \ , \ \frac{n}{2} - 1, \ \frac{n}{2} \qquad \hbox{remove} \ \ \frac{n}{2} - 1, \ \frac{n}{2} - 2, \ \ldots, \ 2, \ 1. $$
 In other words, we have the transition rates
 $$ X_t \ \to \ X_t + j \quad \hbox{at rate one} \quad \hbox{for all} \ \ j \in \bigg\{1 - \frac{n}{2}, \ 2 - \frac{n}{2}, \ \ldots \ , \ \frac{n}{2} \bigg\}. $$
 Summing over all the possible values of the increment, we get
 $$ E \,(X_1 - X_0) \ = \ \sum_{j = 1}^n \ \bigg(j - \frac{n}{2} \bigg) \ = \ \frac{n (n + 1)}{2} - \frac{n^2}{2} \ = \ \frac{n}{2} \ > \ 0. $$
 The expected value can be understood intuitively as follows.
 There are $n - 2$ updates that can be paired off in such a way that each pair consists of one update that causes $k$ vertices of type 0
 to flip and one update that causes $k$ vertices of type 1 to flip.
 The remaining update corresponds to a tie that causes $n/2$ vertices of type 0 to flip, which gives the expected value above.
 In particular, an application of the Law of Large Numbers implies that $X_t$ converges almost surely to infinity as time goes to infinity,
 which completes the proof.
\end{proof} \\ \\
 It is straightforward to deduce from Lemma \ref{lem:drift} that
 $$ \begin{array}{l}
  P \,(\eta_t (x) \to 1 \ \hbox{as} \ t \to \infty \ \hbox{for all} \ x \in \Z \ | \,\eta_0 = h_0) \vspace{4pt} \\ \hspace{40pt} \geq \
  P \,(X_t - X_0 \geq 0 \ \hbox{for all} \ t \geq 0) \times P \,(X_0 - X_t \leq 0 \ \hbox{for all} \ t \geq 0) \ > \ 0. \end{array} $$
 Theorem \ref{th:even-1D} follows directly from the previous estimate since the latter implies that, starting with infinitely many
 hyperedges  with a majority of type 1, there exists with probability one a cluster of vertices of type 1 that expands indefinitely.

\indent We now turn to the proof of Theorem \ref{th:odd-1D} which relies on duality techniques.
 The ancestry of a given space-time point, i.e., the set of vertices at earlier times that determine the opinion of the point under consideration,
 grows linearly going backwards in time.
 While the whole structure of the ancestry, which keeps growing indefinitely, is necessary to determine the opinion of a given vertex based on the
 initial configuration, given two vertices, only a finite space-time region is relevant in proving that they share ultimately the same opinion.
 In order to define this space-time region and the dual process starting at a given point, we first introduce
 $$ T (u) \ = \ \{T_j (u) : j \geq 1 \} \quad \hbox{and} \quad c (u) \ = \ u + \frac{n - 1}{2} \quad \hbox{for all} \ u \in \Z. $$
 Note that $c (u)$ is simply the center of the hyperedge $h_u$.
 The dual process starting at a given space-time point $(x, T)$ is the set-valued process initiated at $\hat \eta_0 (x, T) = \{x \}$ and defined
 recursively as follows:
 assuming that the dual process has been defined up to time $s$, we let
 $$ \tau (s) \ = \ T \ - \ \sup \,\{T (v) \cap (0, T - s) : v \in \Z \ \hbox{and} \ \hat \eta_s (x, T) \cap h_v \neq \varnothing \}. $$
 There is a unique vertex $w \in \Z$ such that $T - \tau (s) \in T (w)$.
 Then, we define
 $$ \hat \eta_t (x, T) \ = \ \hat \eta_s (x, T) \ \ \hbox{for all} \ \ t \in (s, \tau (s)) \quad \hbox{and} \quad
    \hat \eta_{\tau (s)} (x, T) \ = \ \hat \eta_s (x, T) \,\cup \,h_w. $$
 In words, going backwards in time, each time the dual process ``encounters'' a line segment in the graphical representation, the corresponding
 hyperedge is added to the process.
 Therefore, the dual process consists of an interval of vertices that grows linearly going backwards in time.
 The graphical representation restricted to the space-time region induced by the dual process together with the initial configuration
 in $\hat \eta_T (x, T)$ allows to determine the opinion of $(x, T)$.
 However, we can prove that two given vertices share ultimately the same opinion without looking at their opinion or the whole
 structure of their dual processes.
 To do so, we define a new process $c_s (x, T)$ that we shall call the \emph{center path} of space-time point $(x, T)$.
 Again, $c_0 (x, T) = x$ and the process is defined recursively based on the Poisson events:
 assuming that the path has been defined until time $s$, let
 $$ \sigma (s) \ = \ T \ - \ \sup \,\{T (v) \cap (0, T - s) : v \in \Z \ \hbox{and} \ c_s (x, T) \in h_v \}. $$
 There is a unique vertex $w \in \Z$ such that $T - \sigma (s) \in T (w)$ and we define
 $$ c_t (x, T) \ = \ c_s (x, T) \ \ \hbox{for all} \ \ t \in (s, \sigma (s)) \quad \hbox{and} \quad
    c_{\sigma (s)} (x, T) \ = \ c (w). $$
 In words, going backwards in time, each time the center path ``encounters'' a line segment in the graphical representation, it jumps to the
 center of this line segment.
 To complete the construction, we now let $x < y$ be two vertices, and define the space-time region $\Omega$ which is delimited by their
 respective center paths by setting
 $$ \begin{array}{rcl}
         S & = & \inf \,\{s > 0 : c_s (x, T) = c_s (y, T) \} \vspace{4pt} \\
    \Omega & = & \{(z, t) \in \Z \times (\max (T - S, 0), T) : c_{T - t} (x, T) \leq z \leq c_{T - t} (y, T) \}. \end{array} $$
 We refer to Figure \ref{fig:dual} for a picture where $\Omega$ is represented by the hatched polygonal region.
 The key to proving Theorem \ref{th:odd-1D} is that, provided the center paths intersect by time 0, all space-time points in the region
 $\Omega$ share the same opinion, which is established in the following lemma.

\begin{figure}[t!]
\centering
\includegraphics[width=280pt]{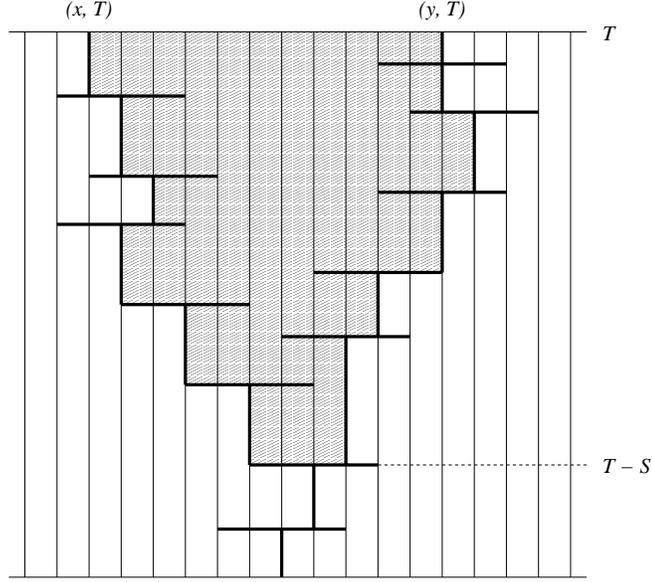}
\caption{\upshape{Picture of the center paths when $n = 5$}}
\label{fig:dual}
\end{figure}

\begin{lemma} --
\label{lem:common}
 Assume that $S < T$.
 Then, the function $\Phi (z, t) := \eta_t (z)$ is constant on $\Omega$.
\end{lemma}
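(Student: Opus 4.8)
The plan is to prove the statement in two stages. First I would show that the opinion is invariant along each individual center path, which identifies a single candidate color $a^*$; then I would show by induction on the jumps of the two paths that every time-slice of $\Omega$ is monochromatic of that color.

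For the first stage, I would verify that the map $s \mapsto \eta_{T-s}(c_s(x,T))$ is constant. On each dual-time interval $(s, \sigma(s))$ the center path sits at a fixed vertex $p = c_s(x,T)$, and by the very definition of $\sigma(s)$ there is no arrival of a hyperedge containing $p$ in the corresponding forward window $(T - \sigma(s), T - s)$; hence the opinion at $p$ does not change there. At the jump time $\sigma(s)$ the updated hyperedge $h_w$ contains $p$ and has center $c(w) = c_{\sigma(s)}(x,T)$, so immediately after the update all vertices of $h_w$ --- in particular $p$ and $c(w)$ --- share a common opinion, which matches the value carried just before the jump. This shows the map is continuous across the jump, hence constant. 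Applying this to both $x$ and $y$ and using $c_S(x,T) = c_S(y,T)$ produces a single color $a^*$ with $\eta_{T-s}(c_s(x,T)) = \eta_{T-s}(c_s(y,T)) = a^*$ for all $s \in [0,S]$.

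The second and main stage is to prove that for every $s \in [0,S]$ one has $\eta_{T-s}(z) = a^*$ for all integers $z$ with $c_s(x,T) \leq z \leq c_s(y,T)$, which is exactly the assertion of the lemma once re-expressed through $\Omega$. I would argue by induction on the finitely many jumps of the two paths, running forward in real time, that is, from the coalescence time $s = S$ down to $s = 0$; the base case is trivial since at $s = S$ the slice is the single vertex $c_S(x,T)$, whose color is $a^*$. For the inductive step, consider a jump of, say, the left path through a hyperedge $h_w$ containing its current position, so that the old left endpoint is exactly the center $c(w)$. The crucial geometric point is that before coalescence the right endpoint $c_s(y,T)$ cannot belong to $h_w$ --- otherwise both paths would jump to $c(w)$ and meet --- so the half of $h_w$ running from its center $c(w)$ to its right end lies entirely inside the old slice. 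By the induction hypothesis this half is monochromatic of color $a^*$, and since it consists of $(n+1)/2 > n/2$ vertices it is a strict majority of $h_w$; hence the update paints all of $h_w$ with color $a^*$. Combining the freshly repainted part of $h_w$ with the portion of the slice lying outside $h_w$, which is untouched by the update and was already $a^*$, shows the new slice is again monochromatic of color $a^*$. The symmetric argument handles a jump of the right path, and the same computation shows the two paths remain ordered and can coincide only through a shared update, i.e. only at time $S$.

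Putting the two stages together gives $\eta_{T-s}(z) = a^*$ for all $s \in [0,S]$ and all $z$ between the two paths, which is precisely the statement that $\Phi(z,t) = \eta_t(z)$ is constant on $\Omega$. The oddness of $n$ enters decisively here: it guarantees that $c(w)$ is a genuine lattice vertex and that the majority-carrying half of $h_w$ is a \emph{strict} majority, so that the tie-breaking rule never intervenes. I expect the main obstacle to be the bookkeeping of the inductive step --- carefully separating the vertices of the new slice that lie inside $h_w$ (and are repainted $a^*$ by the update) from those outside it (which are $a^*$ by the induction hypothesis), distinguishing the case where the slice shrinks from the case where it grows, and using the not-yet-coalesced hypothesis to confirm throughout that the majority half of $h_w$ is indeed contained in the previous slice.
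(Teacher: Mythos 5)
Your Stage 2 is essentially the paper's own argument: a forward-in-real-time induction showing that each time slice between the two center paths stays monochromatic, resting on the two correct observations that before coalescence the non-jumping endpoint cannot lie in the updated hyperedge, and that the half of $h_w$ running from its center $c(w)$ to its far end therefore lies inside the old slice and forms a strict majority of $(n+1)/2 > n/2$ vertices. The genuine problem is Stage 1, on which your base case and your identification of $a^*$ rest: the opinion is \emph{not} invariant along a center path, and your argument only proves one side of the required continuity. On the dual interval $(s,\sigma(s))$ the path carries the opinion of $p$ over the forward window $(T-\sigma(s),T-s)$, whose \emph{left} endpoint is the update time; so the value carried just before the dual jump is the post-update majority of $h_w$, and it does match the value $\eta_{T-\sigma(s)}(c(w))$ at the jump, as you say. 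What fails is the other side: for dual times just \emph{after} $\sigma(s)$ the path sits at $c(w)$ at forward times \emph{before} the update, so there it carries the pre-update opinion of $c(w)$, which need not equal the majority of $h_w$. Concretely, take $n=3$ and configuration $(1,0,1)$ on $h_w$ just before its update: the majority is $1$ but the center holds $0$, so the map $s \mapsto \eta_{T-s}(c_s(x,T))$ jumps from $1$ to $0$ as $s$ crosses $\sigma(s)$. This is precisely why the paper never claims the center paths carry opinions --- they are purely geometric objects used to delimit $\Omega$; constancy of the opinion along the paths inside $[0,S]$ is a \emph{consequence} of the lemma, not a tool available to prove it.

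The gap is repairable without touching your Stage 2, because all Stage 1 is really needed for is the base case. Replace it by the paper's: at the coalescence time $T-S$ both pre-jump positions $c_{S-}(x,T)$ and $c_{S-}(y,T)$ lie in the single updated hyperedge $h_u$ (otherwise the paths could not meet at dual time $S$), so immediately after this update the slice $[c_{S-}(x,T),c_{S-}(y,T)]\subset h_u$ is monochromatic; call its color $a^*$ and run your induction from there. Your single-vertex base case at dual time exactly $S$ does not suffice on its own: the widening of the slice from $\{c(u)\}$ to $[c_{S-}(x,T),c_{S-}(y,T)]$ happens at the coalescence update, which is exactly the event your inductive template excludes (there the other endpoint \emph{does} lie in the updated hyperedge). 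Two smaller points: your induction is indexed by jumps of the paths, but between jumps the slice can still be hit by hyperedges lying strictly inside it and containing neither endpoint; these need the (trivial) extra case that such a hyperedge is contained in the monochromatic slice, so its update is a no-op --- this is case 1 of the paper's proof. Finally, the identity $\eta_T(x)=\eta_T(y)=a^*$ should be read off from the induction at times near $T$, rather than taken as an input.
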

\begin{proof}
 Define $\Lambda = \{(u, t) \in \Z \times \R_+ : t \in T (u) \}$ and the collection
 $$ H^{\star} \ = \ \{h (u, t) := h_u \times \{t \} : (u, t) \in \Lambda \ \hbox{and} \ h (u, t) \cap \Omega \neq \varnothing \} $$
 which can be seen as the set of all line segments of the graphical representation that intersect the space-time region $\Omega$.
 Note that this corresponds to the set of all Poisson events that may affect the configuration of the process in $\Omega$.
 First, by definition of $S$, there is $h (u, t) \in H^{\star}$ such that
 $$ t \ = \ T - S \quad \hbox{and} \quad c_{S-} (x, T), c_{S-} (y, T) \in h_u $$
 from which it follows that $\Phi$ is constant on $\Omega \cap (\Z \times \{T - S \})$ and equal to the majority type in the hyperedge
 $h_u$ at time $T - S$.
 To prove that this property is retained at later times, let
 $$ h (u, t) \ \in \ H^{\star} \quad \hbox{such that} \quad s := T - t \neq S $$
 and observe that we have the following alternative:
\begin{enumerate}
 \item $c_{s-} (x, T), c_{s-} (y, T) \notin h_u$ and then
  $$ h_u \ \subset \ (c_s (x, T), c_s (y, T)) \,\cap \,\Z \quad \hbox{and} \quad \card \,\{h_u \cap [c_s (x, T), c_s (y, T)] \} \ = \ n. $$
 \item $c_{s-} (x, T) \in h_u$ and then $c_s (x, T) = c (u)$ and
  $$ \card \,\{h_u \cap [c_s (x, T), c_s (y, T)] \} \ = \ \card \,\{c (u), c (u) + 1, \ldots, u + n - 1 \} \ > \ n/2. $$
 \item $c_{s-} (y, T) \in h_u$ and then $c_s (y, T) = c (u)$ and
  $$ \card \,\{h_u \cap [c_s (x, T), c_s (y, T)] \} \ = \ \card \,\{u, u + 1, \ldots, c (u) \} \ > \ n/2. $$
\end{enumerate}
 In all three cases, we have that
 $$ \eta_{t-} (z) = i \ \ \hbox{for} \ c_s (x, T) \leq z \leq c_s (y, T) \quad \hbox{implies} \quad \card \,\{z \in h_u : \eta_{t-} (z) = i \} \ > \ n/2 $$
 from which it follows that $\eta_t (z) = i$ for all $c_{s-} (x, T) \leq z \leq c_{s-} (y, T)$.
 This indicates that the property to be proved is retained going forward in time through the Poisson events in $H^{\star}$.
 Since the other Poisson events do not affect the space-time region $\Omega$, the lemma follows.
\end{proof} \\ \\
 In view of the previous lemma, we have $\eta_T (x) = \eta_T (y)$ whenever $S < T$.
 In other respect, the same argument as in Lemma \ref{lem:drift} implies that both center paths evolve according to independent symmetric
 random walks until they intersect.
 More precisely,
 $$ c_s (x, T) \ \to \ c_s (x + T) + j \quad \hbox{at rate one} \quad \hbox{for all} \ \ j \in \bigg\{ - \frac{n - 1}{2}, \ \ldots \ , \ \frac{n - 1}{2} \bigg\}. $$
 Since symmetric random walks are recurrent in one dimension, the probability that they intersect by time 0, that is $S < T$,
 approaches one as time $T \to \infty$.
 This proves Theorem \ref{th:odd-1D}.

%%%%%%%%%%%%%%%%%%%%%%%%%%%%%%%%%%%%%%%%%%%%%%%%%%%%%%%%%%%%%%%%%%%%%%%%%%%%%%%%%%%%%%%%%%%%%%%%%%%%%%%%%%%%%%%%%%%%%%%%%%%%%%%%%%%%%%%%%%

\section{Proof of Theorem \ref{th:even-2D} ($d = 2$ and $n = 2$)}
\label{sec:even-2D}

\noindent This section is devoted to Theorem \ref{th:even-2D} whose proof is based on a rescaling argument.
 This technique is also known as block construction and was introduced by Bramson and Durrett \cite{bramson_durrett_1988} and further
 refined by Durrett \cite{durrett_1995}.
 Even though the block construction is now a standard tool in the field of interacting particle systems, its application is rarely
 straightforward and requires additional nonstandard arguments, especially in the case of the majority rule model.

\indent In preparation for the application of a block construction, we first investigate a new process that we shall
 call the \emph{slice process} which is the 2-dimensional 4-neighborhood majority rule model modified in the following two ways.
 First, the process is restricted to the horizontal slice
 $$ S_3 \ = \ \{x = (x_1, x_2) \in \Z^2 : |x_2| \leq 1 \} $$
 in the sense that all vertices in the complement of $S_3$ are unchangeably in state 0.
 Second, the process is modified so that all vertices to the right of a vertex in state 0 and with the same second
 coordinate flip instantaneously to state 0, which implies that updates that result in the existence of a vertex in state 1
 to the right of a vertex in state 0 are suppressed.
 In particular, state 1 is instantaneously driven to extinction when starting from a random initial condition for which
 $$ P \,(\hbox{for all $(x_1, x_2) \in S_3$ there exists $z_1 \leq x_1$ such that $(z_1, x_2)$ is in state 0}) \ = \ 1. $$
 Therefore, to avoid trivialities, we assume that the slice process starts from the deterministic configuration in which
 all the vertices in the horizontal slice $S_3$ with a nonpositive first coordinate are in state 1 and all other vertices
 are in state 0.
 Although our verbal description of the slice process is probably clear enough, for the sake of rigor we also give its
 Markov generator
%  $$ \begin{array}{rcl} L_3 f (\bar \eta) & = &
%   \displaystyle \sum_{x_1 \in \Z} \ \sum_{x_2 \in \Z}  \ [f (\bar \eta_{S_x, 0}) - f (\bar \eta)] \ \times \ \ind \ \bigg\{\sum_{y \in S_x} \bar \eta (y) < 2 \bigg\} \vspace{8pt} \\ & + &
%   \displaystyle \sum_{x_1 \in \Z} \ \sum_{x_2 = -1, 0} \ [f (\bar \eta_{S_x, 1}) - f (\bar \eta)] \ \times \ \ind \ \bigg\{\sum_{y \in S_x} \bar \eta (y) \geq 2 \bigg\} \
%   \prod_{z_1 < x_1} \prod_{z_2 = 0, 1} \bar \eta ((z_1, x_2 + z_2)) \end{array} $$
%  $$ \begin{array}{l} L_3 f (\bar \eta) \ = \
%   \displaystyle \sum_{x_1 \in \Z} \ \sum_{x_2 \in \Z}  \ \ind \,\{\card (\bar \eta \cap h_x) < 2 \} \ [f (\bar \eta \setminus h_x) - f (\bar \eta)] \vspace{8pt} \\ \hspace{20pt} + \
%   \displaystyle \sum_{x_1 \in \Z} \ \sum_{x_2 = -1, 0} \ \prod_{z_1 < x_1} \prod_{z_2 = 0, 1} \bar \eta ((z_1, x_2 + z_2)) \
%   \displaystyle \ind \,\{\card (\bar \eta \cap h_x) \geq 2 \} \ [f (\bar \eta \cup h_x) - f (\bar \eta)] \ \end{array} $$
 $$ \begin{array}{l} L_3 f (\bar \eta) \ = \
  \displaystyle \sum_{x} \ \ind \,\{\card \,(\bar \eta \cap h_x) < 2 \} \ [f (\bar \eta \setminus h_x) - f (\bar \eta)] \vspace{2pt} \\ \hspace{10pt} + \
  \displaystyle \sum_{x} \ \ind \,\{\card \,(\bar \eta \cap h_x) \geq 2, \ h_x \subset S_3, \ (- \infty, x_1) \times \{x_2, x_2 + 1 \} \subset \bar \eta \} \
  \displaystyle [f (\bar \eta \cup h_x) - f (\bar \eta)] \ \end{array} $$
 where $x_1$ and $x_2$ denote the first and second coordinates of vertex $x \in \Z^2$.
 Note that the slice process is stochastically smaller than the original majority rule model, i.e., the processes starting from the
 same initial configuration can be coupled in such a way that, at all times, the set of type 1 vertices of the majority rule model contains
 the set of type 1 vertices of the slice process.
 The reason for introducing the two modification rules that define the slice process is that they simplify the dynamics to make
 them more tractable mathematically without however preventing opinion 1 from invading the slice $S_3$ so that Theorem \ref{th:even-2D}
 can be eventually deduced from stochastic domination and a block construction.
%  In order to prove Theorem 13, we will introduce a new process that is stochastically less than the 2-dimensional 4-neighborhood
%  majority rule model.  We will define the slice process as the majority rule model on a slice of $\Z^2$ with infinite length and width 5.
%  The top and bottom strips are unchangeably state 0, and state 1 can invade the middle of the slice.
%  Also, to simplify the process further, a site cannot change to state 1 if any sites to the right of it are of state 0.
% \begin{defin}
%  The Slice process
%  $$ \begin{array}{rcl} L f (\eta) & = &
%   \displaystyle \sum_{x_1 \in \Z} \sum_{x_2 \in [4]} \ [f (\eta_{S_{x_1,x_2}, 0}) - f (\eta)] \ \times \
%   \ind \ \bigg\{\sum_{y \in S_{x_1,x_2}} \eta (y) < 2 \bigg\}  + \
%   \displaystyle \sum_{x_1 \in \Z}  \sum_{x_2 \in [4]} \ [f (\eta_{S_{x_1,x_2}, 1}) - f (\eta)] \ \times \\ && \
%   \ind \ \bigg\{\sum_{y \in S_{x_1,x_2}} \eta (y) \geq 2 \hspace{2pt} \bigcap \hspace{2pt}
%    x_2 \in \{2,3\} \hspace{2pt} \bigcap \hspace{2pt} \forall i<x_1, \eta(i,x_2) + \eta(i,x_2+1)=0 \bigg\}
% \end{array} $$
% \end{defin}

%  The process can also be defined through the rates $X^+_t$, $X_t$, and $X^-_t$ which are defined by the x-coordinate of the farthest site from the y axis of state 1 with y-coordinate 1, 0, and -1 respectively. %Fix- these aren't analogous to the previous definition's coordinates.
%  $$ \begin{array}{lcl}
%  X^+_t = \ind(X_t\ge X^+_t) - \ind(X_t<X^+_t) - 1 \\
%  X_t = \ind(X^+_t\ge X_t) + \ind(X^-_t\ge X_t) - \ind(X^-_t<X_t) - \ind(X^+_t<X_t)\\
%  X^-_t = \ind(X_t\ge X^-_t) - \ind(X_t<X^-_t) - 1 \end{array} $$

\indent To investigate the slice process and prove that it invades the slice $S_3$ we note that the second modification rule implies
 that, for $x_2 = -1, 0, 1$, all vertices with second coordinate $x_2$ to the left of the rightmost vertex in state 1 and also with
 second coordinate $x_2$ are in state 1.
 In particular, the configuration of the slice process is uniquely defined by the position of its three rightmost vertices in
 state 1 with second coordinate $-1, 0, 1$, or equivalently the Markov process
 $$ X_t = (X_t (x_2) : x_2 = -1, 0, 1) \ \ \hbox{where} \ \ X_t (x_2) = \max \,\{x_1 : \bar \eta_t ((x_1, x_2)) = 1 \}. $$
 Note also that the dynamics of the slice process induced by the 4-neighborhood majority rule model imply that the middle component of
 the process $X_t$ cannot be simultaneously smaller than the other two components.
 By invasion in the slice we mean almost sure convergence of all three components to infinity.
 We first introduce the following functional associated to the slice process:
 $$ \begin{array}{rcl}
      D (\Sigma_t) & = & \lim_{h \to 0} \ h^{-1} \,(\Sigma_{t + h} - \Sigma_t) \ \ \hbox{where} \ \ \Sigma_t = X_t (-1) + X_t (0) + X_t (1), \vspace{4pt} \\
           D (G_t) & = & \lim_{h \to 0} \ h^{-1} \,(G_{t + h} - G_t) \ \ \hbox{where} \ \ G_t = |X_t (1) - X_t (0)| + |X_t (-1) - X_t (0)| \end{array} $$
 that we call the \emph{sum's drift} and the \emph{gap's drift}, respectively.
 The analysis of these two processes indicate that the sum $\Sigma_t$ drifts to infinity whereas the gap $G_t$ is uniformly
 bounded in time, from which it follows that all three components of $X_t$ converge almost surely to infinity.
 The analysis of the sum's and gap's drifts relies on asymptotic properties of the functional
 $$ \iota (X_t) \ = \ (X_t^+, X_t^-) \ \ \hbox{where} \ \ X_t^+ = X_t (1) - X_t (0) \ \ \hbox{and} \ \ X_t^- = X_t (-1) - X_t (0) $$
 that we shall call for obvious reasons the \emph{interface process}.
 Letting $(a, b)$ denotes the state of the interface, we always have $a \leq 0$ or $b \leq 0$ because the middle component of the process
 $X_t$ cannot be simultaneously smaller than its other two components.
 We also observe that the value of the sum's drift and the value of the gap's drift are not affected by the symmetry about the $x$-axis.
 In particular, we identify interfaces that can be deduced from one another by this axial symmetry, that is we identify states $(a, b)$
 and $(b, a)$.
 Therefore, the interface process can be seen as a continuous-time random walk on a certain connected graph with vertex set
 $$ V \ = \ \{(a, b) : a \leq 0 \ \hbox{and} \ a \leq b \}. $$
 We have represented the transition rates of the interface process on a portion of this connected graph around vertex $(0, 0) \in V$
 in Figure \ref{fig:drift}.
 Information about the dynamics of the interface given in this figure are employed frequently in some of the following lemmas.
%  The first step is to prove important properties of the drift and interface processes at equilibrium, which will be used to deduce 
%  almost sure convergence of $\Sigma_t$ to infinity.
%  Then, we will show that the rightmost type 1 particles at level $-1, 0, 1$ are attracted to each other so that properties of the
%  process $\Sigma_t$ extend to all three components of the process $X_t$.
%  We now make these arguments precise.

\begin{figure}[t!]
\centering
\includegraphics[width=400pt]{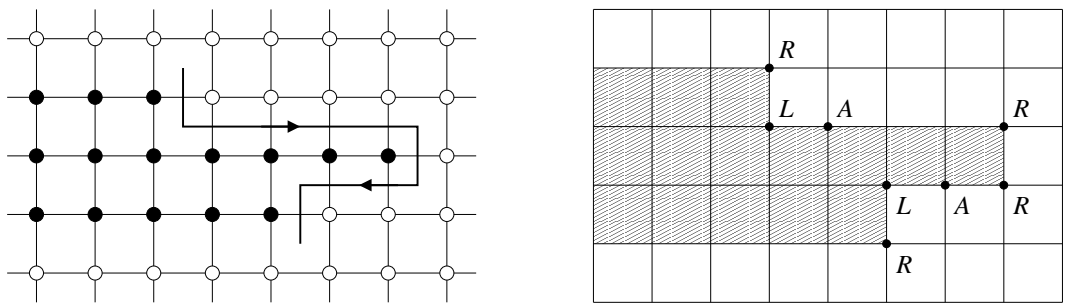}
\caption{\upshape{Pictures related to the proof of Lemma \ref{lem:drift-1}}}
\label{fig:drift-1}
\end{figure}

\begin{lemma} --
\label{lem:drift-1}
 We have $D (\Sigma_t) = 2 \,(N (X_t) - 1)$ where $N (X_t) = \ind \,\{|X_t^+| \neq 1 \} + \ind \,\{|X_t^-| \neq 1 \}$.
\end{lemma}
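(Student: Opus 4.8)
The plan is to compute the drift directly from the generator $L_3$. Because every hyperedge is updated at rate one and, from a fixed configuration, the effect of updating a given $2 \times 2$ block on the three frontiers $X_t (-1), X_t (0), X_t (1)$ is deterministic, the drift is additive over frontiers,
$$ D (\Sigma_t) \ = \ D (X_t (-1)) + D (X_t (0)) + D (X_t (1)), $$
and each $D (X_t (j))$ is the sum, over all blocks, of the change they produce in the $j$th frontier, only the finitely many blocks meeting a frontier contributing. Throughout I would exploit three features of $L_3$: a block can fill only if it contains at least two type 1 vertices; the last indicator (the condition $(- \infty, x_1) \times \{x_2, x_2 + 1 \} \subset \bar \eta$) forces a filling update to act exactly at the common frontier of the two rows it meets; and a block meeting rows $\{1, 2 \}$ or $\{-2, -1 \}$ lies partly outside $S_3$, hence can only empty. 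Together with the half-line structure preserved by the slice dynamics, this makes each $\Delta X_t (j)$ explicit.

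First I would use the reflection symmetry across the $x$-axis, which swaps rows $1$ and $-1$ and hence $X_t^+$ and $X_t^-$, to decouple the bookkeeping. The blocks split into an upper family (those meeting rows $\{0, 1 \}$ or $\{1, 2 \}$) and a lower family (those meeting rows $\{-1, 0 \}$ or $\{-2, -1 \}$), and since a block only sees the two rows it occupies, the total contribution of the upper family to $D (\Sigma_t)$ depends solely on the gap $X_t^+$ and that of the lower family solely on $X_t^-$. The row $0$ frontier is shared, but its drift is just the sum of one contribution from each family, so additivity over these disjoint families gives
$$ D (\Sigma_t) \ = \ U (X_t^+) + U (X_t^-), $$
where $U$ is the same function on both sides by symmetry.

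It then remains to compute $U (k)$ by a short case analysis on the gap $k$, with the relevant transition rates summarized in Figure \ref{fig:drift-1}. A direct enumeration in the three regimes $k = 0$, $|k| = 1$, and $|k| \geq 2$ -- balancing, in each, the filling updates that advance the lagging frontier (including, when $|k| \geq 2$, one that does so by two) against the emptying updates that trim the leading tip by way of the outer blocks -- yields $U (k) = 2 \, \ind \{|k| \neq 1 \} - 1$, and therefore
$$ D (\Sigma_t) \ = \ \big( 2 \, \ind \{|X_t^+| \neq 1 \} - 1 \big) + \big( 2 \, \ind \{|X_t^-| \neq 1 \} - 1 \big) \ = \ 2 \,(N (X_t) - 1), $$
which is the claim.

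The main obstacle is the completeness of the enumeration rather than any single estimate, the delicate point being that the answer is $2 (N - 1)$ and not twice that. One must not omit the outer blocks meeting rows $\{1, 2 \}$ and $\{-2, -1 \}$, which can only empty and which supply exactly the retreats at the leading tips that cancel the naive count, and one must correctly account for the double advance of a lagging frontier when a gap exceeds one. A secondary point is to justify carefully that the upper and lower families decouple despite both being able to move the shared row $0$ frontier; this follows from additivity of the drift over disjoint block families, but it must be stated so that no block is counted twice and none is omitted.
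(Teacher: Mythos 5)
Your proposal is correct, but it follows a genuinely different route from the paper's. The paper works with the contour of the slice configuration: it orients the finite portion $\vec{\gamma}$ of the contour joining $\gamma^+$ to $\gamma^-$, classifies each dual-lattice point on it as a right turn (update loses one vertex), a left turn (gains one), or a straight point, active (gains two) or inactive (no effect, by the second modification rule), then invokes the geometric fact that the number of right turns always exceeds the number of left turns by exactly two, so that $D (\Sigma_t) = 2 \times (\hbox{number of active straight points}) - 2$; the lemma follows by checking that there is an active straight point at level $1/2$ iff $|X_t^+| \neq 1$ and one at level $-1/2$ iff $|X_t^-| \neq 1$. You instead compute directly from the generator: you partition the blocks into the upper family (rows $\{0,1\}$ and $\{1,2\}$) and the lower family (rows $\{-1,0\}$ and $\{-2,-1\}$), note that no block meets both row $1$ and row $-1$ so the families are disjoint and each family's total contribution depends only on its own gap, use the reflection symmetry to write $D (\Sigma_t) = U (X_t^+) + U (X_t^-)$, and enumerate the three regimes to get $U (k) = 2 \,\ind \,\{|k| \neq 1 \} - 1$. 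Your stated contributions check out against the generator: for $k = 0$, one fill of $+2$ and one outer-block retreat of $-1$; for $|k| = 1$, one fill of $+1$ against two retreats of $-1$; for $|k| \geq 2$, fills of $+1$ and $+2$ (the latter being the double advance you flag) against two retreats of $-1$; and the shared row-$0$ frontier causes no double counting since the drift is a sum over blocks and each block lies in exactly one family. As for what each approach buys: yours is more elementary and self-contained, making the additivity and symmetry structure explicit, at the price of an enumeration whose exhaustiveness must be verified by hand (exactly the risk you identify); the paper's turn-counting identity handles every interface shape uniformly with no case analysis on the gap, and the same contour classification is reused in the proof of Lemma \ref{lem:drift-2}, so it amortizes better over the rest of Section \ref{sec:even-2D}.
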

\begin{proof}
 The proof relies on a series of simple geometric arguments.
 First of all, we observe that each point $z$ of the dual lattice of $\Z^2$ has exactly four nearest neighbors in $\Z^2$.
 These four neighbors define a size 4 neighborhood that is updated at rate 1 and that we call the neighborhood with center $z$.
 For each configuration $\bar \eta$ of the slice process, we let $\Gamma = \Gamma (\bar \eta)$ be the contour associated with
 $\bar \eta$ defined as in Section \ref{sec:results}.
 Note that this contour is a doubly infinite self-avoiding path on the dual lattice.
 We let $\vec{\gamma}$ be the finite portion of this path that connects the points
 $$ \gamma^+ \ = \ \bigg(X_t ( 1) + \frac{1}{2},   \frac{3}{2} \bigg) \quad \hbox{and} \quad
    \gamma^- \ = \ \bigg(X_t (-1) + \frac{1}{2}, - \frac{3}{2} \bigg) $$
 and orient this portion from point $\gamma^+$ to point $\gamma^-$.
 The first picture of Figure \ref{fig:drift-1} gives an example of configuration of the slice process where black dots refer to
 vertices in state 1 and white dots to vertices in state 0, together with the corresponding oriented path $\vec{\gamma}$
 represented in thick lines.
 Note that the oriented path $\Gamma (\bar \eta_t)$ has exactly $G_t + 4$ vertices so we write
 $$ \vec{\gamma} \ = \ (\gamma (1) = \gamma^+, \gamma (2), \gamma (3), \ldots, \gamma (G_t + 4) = \gamma^-) $$
 in the direction of the orientation, and let $\ep (j)$ be the edge connecting $\gamma (j)$ and $\gamma (j + 1)$.
 Note also that any update in a neighborhood whose center does not belong to the oriented path does not yield any change in the configuration
 of the slice process, either because this neighborhood already contains four vertices in the same state, or because it contains two vertices
 in each state but is not included in $S_3$.
 To compute the drift of $\Sigma_t$, we introduce the following classification.
\begin{enumerate}
 \item Point $z \in \vec{\gamma}$ is called a \emph{right turn} if the neighborhood with center $z$ contains exactly one vertex in state 1.
  In this case, an update of the slice process in this neighborhood always results in one vertex changing from state 1 to state 0. \vspace{2pt}
 \item Point $z \in \vec{\gamma}$ is called a \emph{straight point} if the neighborhood with center $z$ contains exactly two vertices
  in state 1 and two vertices in state 0.
 \begin{enumerate}
  \item The straight point is said to be \emph{active} if the neighborhood with center $z - e_1$ contains three or four vertices in state 1,
   in which case an update in the neighborhood with center $z$ results in two vertices changing from state 0 to state 1.
  \item Otherwise, the straight point is said to be \emph{inactive}, in which case an update in the neighborhood with center $z$ does not yield
   any change in the configuration of the slice process due to the second modification rule.
 \end{enumerate}
 \item Point $z \in \vec{\gamma}$ is called a \emph{left turn} if the neighborhood with center $z$ contains exactly three vertices in state 1.
  In this case, an update of the slice process in this neighborhood always results in one vertex changing from state 0 to state 1.
\end{enumerate}
 In the second picture of Figure \ref{fig:drift-1}, right turns, active straight points, and left turns corresponding to the configuration in
 the first picture are marked with the letters $R, A, L$, respectively.
 Since neighborhoods are updated independently and at rate one, the drift can be computed based on the number of left/right turns and active
 straight points.
 To count the number of points in each class, we first observe that $\gamma^+$ and $\gamma^-$ are always right turns, while to determine the
 class of the other points, we distinguish between the following two cases.
\begin{enumerate}
 \item In case $X_t^+ = 0$, point $\gamma (2) = \gamma (|X_t^+| + 2)$ is an active straight point. \vspace{2pt}
 \item In case $X_t^+ \neq 0$, we observe that edges $e (1)$ and $e (|X_t^+| + 2)$ are downwards vertical edges while intermediate edges are
  horizontal edges.
  This implies that $\gamma (2)$ and $\gamma (|X_t^+| + 2)$ are turns with opposite directions, and that intermediate points are straight points.
\end{enumerate}
 The class of points $\gamma (j)$, $j = |X_t^+| + 3, \ldots, G_t + 3$, can be determined similarly.
 In particular, the number of right turns minus the number of left turns always equals two, which allows to quantify the drift
 in terms of the number of active straight points exclusively:
 $$ D (\Sigma_t) \ = \ 2 \times \hbox{number of active straight points} - 2. $$
 Finally, there is one active straight point with second coordinate $1/2$ if and only if $X_t^+ \neq 1$, and one with second coordinate $-1/2$
 if and only if $X_t^- \neq 1$.
%  similarly for points at level $-1/2$.
 Therefore, we conclude that the number of active straight points is simply equal to $N (X_t)$, which completes the proof of the Lemma.
\end{proof}

\begin{figure}[t!]
\centering
\includegraphics[width=400pt]{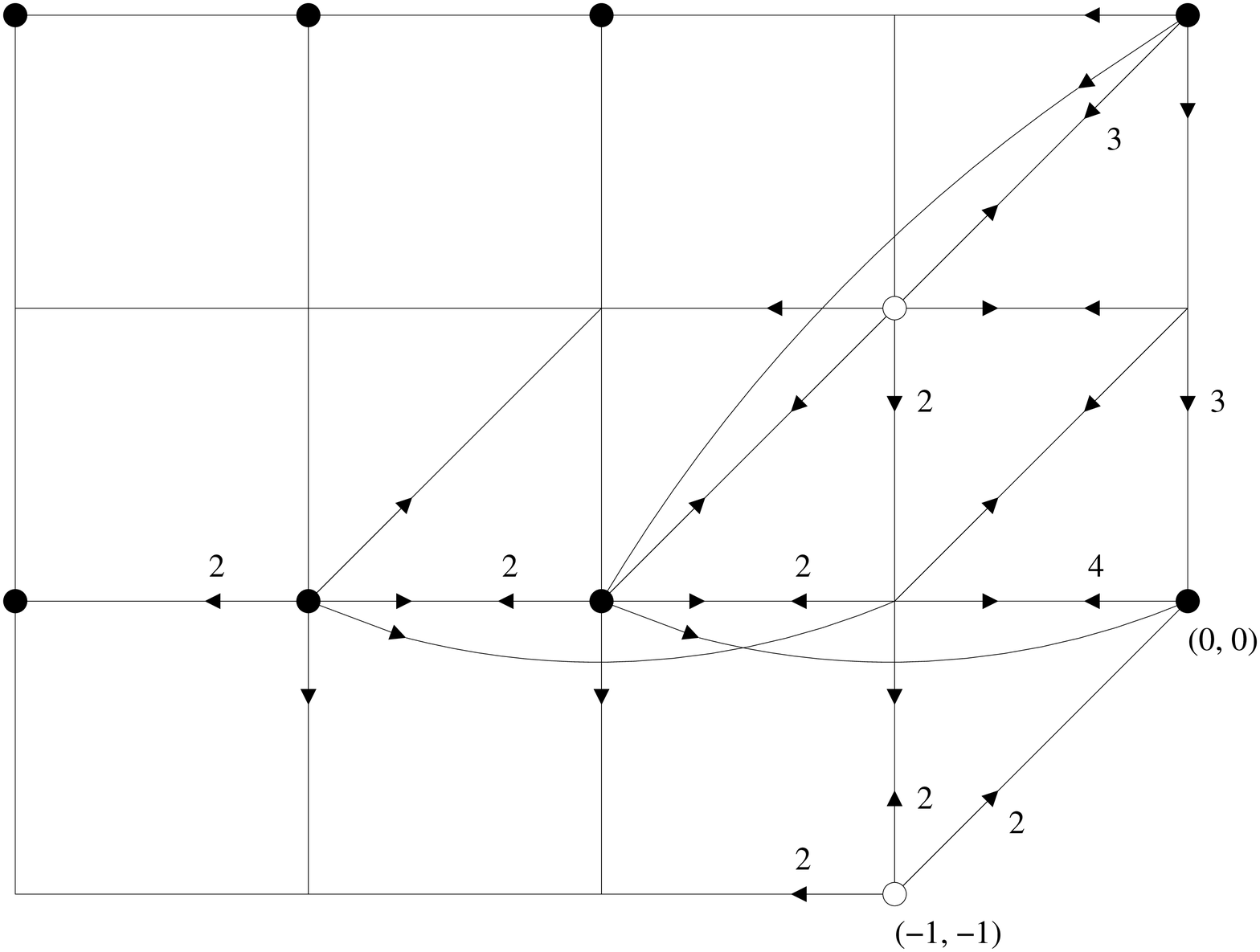}
\caption{\upshape{Transition rates of the interface}}
\label{fig:drift}
\end{figure}

\begin{lemma} --
\label{lem:deviation-1}
 There exist constants $C_1 < \infty$, $\gamma_1 > 0$ and $c > 0$ such that
 $$ P \,(\Sigma_{cN} < 7N) \ + \ P \,(\Sigma_t < - N \ \hbox{for some} \ t < cN) \ \leq \ C_1 \,\exp (-\gamma_1 N). $$
\end{lemma}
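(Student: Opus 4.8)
The plan is to turn the exact drift identity of Lemma~\ref{lem:drift-1} into a quantitative statement by exploiting the ergodicity of the interface process and then running a renewal / large-deviation argument. Since $D(\Sigma_t) = 2\,(N(X_t)-1)$ depends on the slice process only through the interface $\iota(X_t) = (X_t^+, X_t^-)$, and since $N = 2$ exactly on the states with $|X_t^+| \neq 1$ and $|X_t^-| \neq 1$, $N = 0$ exactly on the tie states $|X_t^+| = |X_t^-| = 1$, and $N = 1$ otherwise, the whole lemma reduces to showing that $\Sigma_t$ grows at a strictly positive asymptotic speed with exponentially small deviations, and that starting from $\Sigma_0 = 0$ it essentially never dips far below its starting value. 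Both conclusions follow once the asymptotic drift $v := 2\,(E_\pi[N]-1)$ is shown to be strictly positive, where $\pi$ is the stationary distribution of the interface.

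First I would establish that the interface $\iota(X_t)$, viewed as an irreducible continuous-time Markov chain on $V$ with the transition rates displayed in Figure~\ref{fig:drift}, is geometrically ergodic. Using $G = |a| + |b| = G_t$ itself as a Lyapunov function, I would read off from those rates a uniformly negative drift of $G$ once $G$ is large, which gives positive recurrence together with exponential tails both for the return times to the reference state $(0,0)$ and for the stationary law of $G_t$. With $\pi$ available, Lemma~\ref{lem:drift-1} and the ergodic theorem identify the speed of $\Sigma_t$ as $v = 2\,(E_\pi[N]-1)$, so the heart of the matter is the strict inequality $E_\pi[N] > 1$, i.e. $\pi(|a|\neq 1,\ |b|\neq 1) > \pi(|a|=|b|=1)$. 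This says the interface spends strictly more time in states carrying two active straight points (drift $+2$) than in the tie states (drift $-2$), and I would verify it from the explicit rates of Figure~\ref{fig:drift} by comparing the stationary mass near the origin $(0,0)$, where $N = 2$, with the mass on the tie class.

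Granting $v > 0$, I would regenerate the joint process $(\iota(X_t),\Sigma_t)$ at the successive return times $0 = \tau_0 < \tau_1 < \cdots$ of the interface to $(0,0)$. By the strong Markov property the pairs $(\tau_{k+1}-\tau_k,\ \Sigma_{\tau_{k+1}}-\Sigma_{\tau_k})$ are i.i.d.; geometric ergodicity gives the cycle lengths exponential moments, and since each update of a turn or active straight point moves $\Sigma$ by a bounded amount while the number of such sites is of order $G_t$, the per-cycle increment of $\Sigma$ is bounded by the number of updates in the cycle and hence also has exponential moments, with mean $\Delta = v\mu > 0$, where $\mu = E[\tau_1]$. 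Fixing $c$ so that $vc > 7$ with room to spare, a Cramér-type exponential Chebyshev bound on the renewal-reward sum gives $P(\Sigma_{cN} < 7N) \leq C\exp(-\gamma N)$: the number of completed cycles by time $cN$ concentrates near $cN/\mu$ and the associated sum of increments concentrates near $vcN$. The same i.i.d. structure controls the downward fluctuations, since the partial sums $\sum_{k\leq j}(\Sigma_{\tau_{k+1}}-\Sigma_{\tau_k})$ form a positive-drift random walk whose running infimum has an exponential left tail, while a union bound over the $O(N)$ cycles occurring before time $cN$ controls the worst dip of $\Sigma$ inside a single cycle; together these yield $P(\Sigma_t < -N \text{ for some } t < cN) \leq C\exp(-\gamma N)$, and summing the two estimates proves the lemma.

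I expect the main obstacle to be the second paragraph: proving positive recurrence of the interface from the rates of Figure~\ref{fig:drift} and, above all, the \emph{sign} of the stationary drift, namely the strict inequality $E_\pi[N] > 1$, since the wrong sign would make $\Sigma_t$ fail to drift to infinity and the whole block construction collapse. A secondary but genuine technical point is that $\Sigma_t$ jumps at a rate proportional to the unbounded quantity $G_t$; the exponential control of the per-cycle $\Sigma$-increment rests on coupling this rate to the interface excursion and invoking the exponential tails supplied by geometric ergodicity.
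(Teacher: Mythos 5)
Your overall architecture --- reduce the lemma to strict positivity of the long-run average drift of $\Sigma_t$ via Lemma \ref{lem:drift-1}, then convert positive speed into exponential bounds by a regeneration / large-deviation argument --- is the same as the paper's. But the decisive step is missing. The entire content of the lemma is the strict inequality that you yourself flag as ``the main obstacle'': that the interface spends, on average, strictly more time on the drift-$(+2)$ states than on the drift-$(-2)$ states. You reformulate this as $E_\pi[N] > 1$ for the stationary law $\pi$ of the interface and say you would verify it ``by comparing the stationary mass near the origin $(0,0)$ with the mass on the tie class,'' but that is a restatement of the claim, not an argument: the interface chain lives on the infinite set $V = \{(a,b) : a \le 0, \ a \le b\}$, its stationary distribution has no explicit form, and nothing in Figure \ref{fig:drift} hands you such a mass comparison directly. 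The paper resolves exactly this point by an excursion (first-step) analysis that avoids $\pi$ altogether: letting $e(a,b)$ denote the expected time spent on good interfaces before hitting a bad one when starting from $(a,b)$, it derives from the explicit rates the bounds $e(0,0) \ge 11/30$, $e(0,1) \ge 11/50$, $e(0,2) \ge 52/225$, $e(-1,0) \ge 7/50$, $e(-2,0) \ge 17/60$, and then checks that the expected good time accrued after leaving either bad interface $(-1,-1)$ or $(-1,1)$ strictly exceeds $1/6$, which is precisely the expected time spent at a bad interface per visit. That finite, explicit computation is what makes the drift inequality true; your proposal contains no substitute for it, so the proof does not close.

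A secondary issue: your route to geometric ergodicity uses $G_t$ as a Lyapunov function with ``uniformly negative drift once $G$ is large,'' but by Lemma \ref{lem:drift-2} the gap drift is only $\le 0$ (not strictly negative) on the states with $X_t^+ X_t^- = 0$, and such states occur at every height of $G$. So the Foster--Lyapunov step does not go through as stated; one needs the extra observation, used in the proof of Lemma \ref{lem:deviation-2}, that from any such state the chain moves with probability bounded below to a state with $X_t^+ X_t^- \neq 0$, giving a conditional drift bound of the form $E[D(G_t) \mid G_t \ge 2] \le -2b < 0$. These two gaps are of different severity: the second is repairable boilerplate, while the first is the heart of the lemma.
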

\begin{proof}
 Since the drift $D (\Sigma_t) \in \{-2, 0, 2 \}$ according to Lemma \ref{lem:drift-1}, the key step is to prove that
 the fraction of time spent on good interfaces is in average strictly larger than the fraction of time spent on bad
 interfaces, where good interfaces refer to the ones for which the sum's drift is positive and bad interfaces refer to
 the ones for which the sum's drift is negative.
 To compare these two quantities, we let $e (a, b)$ for all $(a, b) \in V$ denote the expected time spent on good interfaces
 before hitting a bad interface when starting from interface $(a, b)$, that is,
 $$ e (a, b) \ = \ E \,\bigg[\int_0^T \ind \,\{D (\Sigma_t) = 2 \} \ dt \ \Big| \ \iota (X_0) = (a, b) \bigg] $$
 where $T = \inf \,\{t > 0 : D (\Sigma_t) = -2 \}$.
 In the picture of Figure \ref{fig:drift}, good interfaces are marked with a black dot and bad interfaces with a white dot.
 Based on the transition rates given in this picture and using successive first-step analyses, we obtain that
 $$ e (0, 0) \ = \ \frac{1}{4} + e (-1, 0) \ \geq \ \frac{1}{4} + \frac{1}{5} \ e (0, 0) + \frac{2}{5} \ e (-2, 0) \ \geq \
                   \frac{1}{4} + \frac{1}{5} \times \frac{1}{4} + \frac{2}{5} \times \frac{1}{6} \ = \ \frac{11}{30} $$
 from which it follows that
 $$ \begin{array}{rcl}
     e (0, 1)  & \geq & \displaystyle \frac{3}{5} \ e (0, 0) \ \geq \ \frac{3}{5} \times \frac{11}{30} \ = \ \frac{11}{50} \vspace{8pt} \\
     e (0, 2)  & \geq & \displaystyle \frac{1}{6} + \frac{1}{6} \ e (-2, 0) + \frac{1}{6} \ \frac{3}{5} \ e (0, 0) \
                 \geq \ \displaystyle \frac{1}{6} + \frac{1}{6} \times \frac{1}{6} + \frac{1}{6} \times \frac{3}{5} \times \frac{11}{30} \ = \ \frac{52}{225} \vspace{8pt} \\
     e (-1, 0) & \geq & \displaystyle \frac{1}{5} \ e (0, 0) + \frac{2}{5} \ e (-2, 0) \
                 \geq \ \displaystyle \frac{1}{5} \times \frac{11}{30} + \frac{2}{5} \times \frac{1}{6} \ = \ \frac{7}{50} \vspace{8pt} \\
     e (-2, 0) & \geq & \displaystyle \frac{1}{6} + \frac{1}{6} \ e (0, 0) + \frac{1}{3} \ e (-3, 0) \
                 \geq \ \displaystyle \frac{1}{6} + \frac{1}{6} \times \frac{11}{30} + \frac{1}{3} \times \frac{1}{6} \ = \ \frac{17}{60}. \end{array} $$
 Using the previous lower bounds, we obtain that the expected time spent on good interfaces after leaving the bad interface $(-1, -1)$
 is bounded from below by
 $$ \frac{e (0, 0)}{3} + \frac{e (-1, 0)}{3} \ \geq \ \frac{1}{3} \times \frac{11}{30} + \frac{1}{3} \times \frac{7}{50} \ = \ \frac{38}{225} \ > \ \frac{38}{228} \ = \ \frac{1}{6} $$
 while the analog for the bad interface $(-1, 1)$ is bounded from below by
 $$ \frac{e (0, 1)}{6} + \frac{e (0, 2)}{6} + \frac{e (-1, 0)}{3} + \frac{e (-2, 0)}{6} \ \geq \ \frac{913}{5400} \ > \ \frac{913}{5478} \ = \ \frac{1}{6}. $$
 Since the expected time spent on each of the two bad interfaces at each visit is equal to $1/6$, and the previous lower bounds indicate that
 the time spent on good configurations between two consecutive visits of a bad interface is strictly larger than $1/6$, we deduce that
 $$ \liminf_{t \to \infty} \,P \,(D (\Sigma_t) = 2) \ - \ \limsup_{t \to \infty} \,P \,(D (\Sigma_t) = -2) \ = \ a \ > \ 0. $$
 In particular, we obtain the inequality
 $$ \liminf_{t \to \infty} \ t^{-1} \ \Sigma_t \ \geq \
      2 \ \liminf_{t \to \infty} \ P \,(D (\Sigma_t) = 2) \ - \ 2 \ \limsup_{t \to \infty} \ P \,(D (\Sigma_t) = -2) \ = \ 2a \ > \ 0. $$
 Let $c = 7 a^{-1}$ and $\ep = a > 0$.
 Since $\Sigma_t$ is asymptotically bounded from below by $2at$, standard large deviation estimates for the Poisson distribution imply that
 $$ \begin{array}{l}
     P \,(\Sigma_{cN} < 7N) \ + \ P \,(\Sigma_t < - N \ \hbox{for some} \ t < cN) \vspace{4pt} \\ \hspace{40pt} \leq \
     P \,(\Sigma_{cN} < (2a - \ep) cN) \ + \ P \,(\Sigma_t < - N \ \hbox{for some} \ t > 0) \vspace{4pt} \\ \hspace{40pt} \leq \
     C_2 \,\exp (-\gamma_2 N) \ + \ C_3 \,\exp (- \gamma_3 N) \end{array} $$
 for suitable constants $C_2, C_3 < \infty$ and $\gamma_2, \gamma_3 > 0$.
 This completes the proof.
\end{proof}

\begin{figure}[t!]
\centering
\includegraphics[width=400pt]{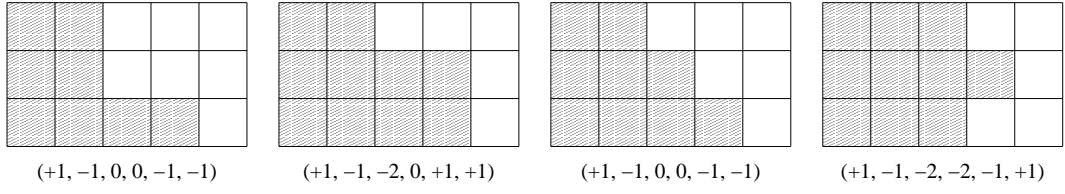}
\caption{\upshape{Pictures related to the proof of Lemma \ref{lem:drift-2}}}
\label{fig:drift-2}
\end{figure}

\begin{lemma} --
\label{lem:drift-2}
 Assume that $G_t \geq 2$.
 Then $D (G_t) \leq - 2 \times \ind \,\{X_t^+ X_t^- \neq 0 \}$.
\end{lemma}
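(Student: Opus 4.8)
The plan is to compute the gap's drift directly as a sum over the transitions of the interface process. Since $G_t = |X_t^+| + |X_t^-|$, I would write
$$ D (G_t) \ = \ \sum \ r \,\Delta G $$
where the sum ranges over the updates available out of the current interface state, $r$ is the corresponding rate, and $\Delta G$ is the resulting change of $G_t$. The point is that these updates are precisely the ones already classified in the proof of Lemma \ref{lem:drift-1}: every update that modifies the configuration occurs, at rate one, at a right turn, a left turn, or an active straight point of the oriented contour $\vec{\gamma}$, and the complete list of transitions together with their rates is displayed in Figure \ref{fig:drift}. Thus the only new work is to record, for each such update, how it moves the three tips $X_t (1), X_t (0), X_t (-1)$, and hence the value of $G_t$.

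Next I would reduce to finitely many cases. For $|X_t^+| \geq 2$ and $|X_t^-| \geq 2$ the straight points away from the three tips are inactive and leave the configuration unchanged, so $\Delta G$ for each relevant update depends only on the local picture near the tips and the common front; this in turn is determined solely by the signs of $X_t^+$ and $X_t^-$ and by whether $|X_t^+|, |X_t^-| \in \{0, 1 \}$ or are at least $2$. It therefore suffices to verify the bound on a finite list of representative interface states $(a, b) \in V$ with $G_t \geq 2$, always invoking the structural constraint that the middle component is never strictly behind both of the others, i.e., that at least one of $X_t^+, X_t^-$ is nonpositive.

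The case split would then follow the vanishing of $X_t^+ X_t^-$. When $X_t^+ X_t^- \neq 0$, no outer row is aligned with the middle front, and the right turn that retreats the protruding middle tip reduces both $|X_t^+|$ and $|X_t^-|$ by one at once; tracking all updates, the positive contributions coming from the retreats at $\gamma^+$ and $\gamma^-$ are dominated and the total comes out to be at most $-2$ (for instance $(a, b) = (-1, 1)$ gives exactly $-2$, while $(-1, -1)$ gives $-4$). When $X_t^+ X_t^- = 0$, one outer row is aligned with the middle row; here the $+1$ produced by the retreat at the corresponding tip is exactly cancelled by the advancing active straight point at the aligned front, so that the drift is at most $0$ (for instance $(a, b) = (-2, 0)$ gives $0$). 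Combining the two cases yields $D (G_t) \leq -2 \,\ind \{X_t^+ X_t^- \neq 0 \}$.

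I expect the main obstacle to be the bookkeeping behind the active/inactive dichotomy of Lemma \ref{lem:drift-1}: for each interface state one must correctly decide which contour points are active straight points and compute the induced $\Delta G$, all while respecting both the constraint that the middle row is never strictly behind its two neighbors and the second modification rule that suppresses any state $1$ lying to the right of a state $0$. The genuinely delicate subcases are the boundary ones $|X_t^\pm| = 1$, where an active straight point appears or disappears and where the exact cancellation (respectively the sharp value $-2$) has to be checked by hand rather than read off a generic picture.
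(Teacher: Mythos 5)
Your proposal follows essentially the same route as the paper's proof: there, $D(G_t)$ is likewise computed as the sum of the changes of $G_t$ over the rate-one updates singled out by the contour classification of Lemma \ref{lem:drift-1}, the finitely many interfaces with $G_t = 2$ are checked by direct inspection (Figure \ref{fig:drift-2}), and the general case $G_t \geq 2$ is reduced to those pictures by noting that the only additional relevant updates are active straight points, whose updates cannot increase the gap, with exactly your dichotomy $X_t^+ X_t^- = 0$ versus $X_t^+ X_t^- \neq 0$. Your plan is sound and your representative values are all correct ($0$ for $(-2,0)$, $-2$ for $(-1,1)$, $-4$ for $(-1,-1)$).

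One warning for the by-hand verification that you rightly identify as the real content of the lemma: two of your mechanism descriptions are inaccurate and would fail if relied upon. In the staircase case, e.g. $(a,b) = (-1,1)$, the middle tip is not protruding, and the right turn retreating it changes $|X_t^+|$ and $|X_t^-|$ in \emph{opposite} directions, giving $\Delta G = 0$ rather than $-2$; the total of $-2$ comes instead from the two right turns that retreat the protruding outer tip and the left turn that advances the lagging outer row, against a single $+1$ from the retreat at the lagging tip. Likewise, in the aligned case $(-2,0)$, the active straight point at the aligned front advances both aligned rows simultaneously, leaving their mutual gap at $0$ while widening the gap to the lagging row, so it contributes $+1$ rather than cancelling the tip retreats; the compensating negative terms are the left turn ($-1$) and the active straight point ($-2$) that advance the lagging outer row. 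Since your stated method is to enumerate the representative interface states and compute every update, these slips are repairable, but they show that the "generic picture" intuition cannot replace the computation in precisely those cases.
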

\begin{proof}
 First, we note that, when $G_t = 2$, there are only four possible interfaces provided one identifies pairs of interfaces that can be
 deduced from one another by an axial symmetry.
 Note also that there are $G_t + 4 = 6$ possible updates for each of these four interfaces.
 Figure \ref{fig:drift-2} gives a picture of these interfaces.
 The six numbers at the bottom represent the variation of $G_t$ for each of the six possible updates.
 Since each update occurs at rate one, the drift $D (G_t)$ is simply equal to the sum of these six numbers.
 More generally, when $G_t \geq 2$, we have the following alternative.
\begin{enumerate}
 \item In the case $X_t^+ = 0$ or $X_t^- = 0$, there are only six possible updates of the interface, each of which gives the same
  variation of $D (G_t)$ as in one of the first two pictures. Therefore,
  $$ D (G_t) \ \leq \ 1 - 1 - 2 + 1 + 1 \ = \ 0 \ \ \hbox{when} \ \ X_t^+ X_t^- = 0. $$
 \item The case $X_t^+ \neq 0$ and $X_t^- \neq 0$ is similar to one of the last two pictures except that there might be one or two
  active straight points in addition to the six turns.
  Updates at these straight points cannot increase the value of the gap process, therefore,
  $$ D (G_t) \ \leq \ 1 - 1 - 1 - 1 \ = \ - 2 \ \ \hbox{when} \ \ X_t^+ X_t^- \neq 0. $$
%  \item The case $\iota (X_t) = (0, b)$, $b \geq 2$, gives the same drift as in the first picture: $D (G_t) = -2$. \vspace{2pt}
%  \item The case $\iota (X_t) = (a, 0)$, $a \leq - 2$, gives the same drift as in the second picture: $D (G_t) = 0$. \vspace{2pt}
%  \item The case $\iota (X_t) = (a, b)$, $a \leq - 1$ and $b \geq 1$, gives a drift smaller or equal than in the third picture due to
%   the possible presence of an active straight point when $a < -1$ that decreases the drift of two units: $D (G_t) \leq -2$. \vspace{2pt}
%  \item The case $\iota (X_t) = (a, b)$, $a \leq - 1$ and $b \leq - 1$, gives a drift smaller or equal than in the fourth picture
%   due to the possible presence of up to two active straight points when $a < -1$ and $b < -1$ that each decreases the drift of two
%   units: $D (G_t) \leq -4$.
\end{enumerate}
 The lemma follows.
\end{proof}

\begin{lemma} --
\label{lem:deviation-2}
 Let $c > 0$ as in Lemma \ref{lem:deviation-1}.
 Then there exist $C_4 < \infty$ and $\gamma_4 > 0$ such that
 $$ P \,(G_t > \sqrt N \ \hbox{for some} \ t < cN) \ \leq \ C_4 \,\exp (- \gamma_4 \sqrt N). $$
\end{lemma}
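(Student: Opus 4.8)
The plan is to treat $G_t$ as a real-valued functional of the interface Markov chain and to exploit the gap-drift bound of Lemma~\ref{lem:drift-2} through an exponential (Cram\'er-type) estimate, combined with a union bound over the excursions of the gap during the diffusive time window $t<cN$. The first observation I would record is that, although the number of update sites along the oriented path $\vec{\gamma}$ grows linearly with $G_t$, only the turns and the at most two active straight points identified in the proof of Lemma~\ref{lem:drift-1} can actually change $G_t$; all the inactive straight points leave the configuration untouched. Hence the gap performs jumps of size at most two at a rate that is bounded \emph{uniformly} in $G_t$. Consequently, for every sufficiently small $\lambda>0$ the tilted generator obeys $L_3\,e^{\lambda G}\le e^{\lambda G}(\lambda\,D(G)+C\lambda^2)$ with a constant $C$ independent of the interface state, so that by Lemma~\ref{lem:drift-2} the quantity $e^{\lambda G_t}$ is a supermartingale on the region $\{X_t^+X_t^-\neq 0,\ G_t\ge 2\}$ where the gap drift is at most $-2$.

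The core mechanism is then the following. Away from the two axes $\{X^+=0\}$ and $\{X^-=0\}$ the tilted process is a genuine supermartingale, so a single excursion of the interface through this region climbs a height $\sqrt N$ above its starting level only with probability at most $\exp(-\lambda\sqrt N)$, by Doob's maximal inequality. Granting, for the moment, that $G_t$ has an \emph{effective} negative drift bounded above by some $-\delta<0$ on all of $\{G_t\ge 2\}$, the negative drift pushes the gap back to values of order one within an expected time of order one; hence during $[0,cN]$ the gap undergoes at most of order $N$ excursions away from its minimal values. A union bound over these excursions, each reaching level $\sqrt N$ with probability at most $\exp(-\lambda\sqrt N)$, yields $P(G_t>\sqrt N\ \hbox{for some}\ t<cN)\le C\,N\exp(-\lambda\sqrt N)$, and the polynomial prefactor $N$ is absorbed into the exponential at the cost of slightly shrinking the rate, producing the claimed bound $C_4\exp(-\gamma_4\sqrt N)$.

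The hard part is precisely the effective negative drift invoked above, because Lemma~\ref{lem:drift-2} only guarantees $D(G_t)\le 0$ on the two axes, and these axes contain states of arbitrarily large gap, such as the interfaces $(0,b)$ of gap $b$. This gap in the estimate is not cosmetic: over the window $t<cN$ the level $\sqrt N$ is exactly the diffusive scale, so a gap process with merely non-positive --- possibly vanishing --- drift would reach $\sqrt N$ with probability bounded away from zero, and the whole estimate would collapse. I would therefore upgrade the on-axis bound by showing that the interface cannot dwell on an axis: from a large-gap axis state the transition rates displayed in Figure~\ref{fig:drift} force the process into the interior $\{X^+X^-\neq 0\}$ within an $O(1)$ expected time, while a single on-axis sojourn can raise the gap only by a bounded amount. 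Equivalently, one replaces $G$ by a corrected Lyapunov function $G+\psi$, where $\psi$ is a bounded correction measuring proximity to the axes, chosen so that the tilted generator of $e^{\lambda(G+\psi)}$ is strictly negative everywhere on $\{G\ge 2\}$. Establishing this strict negativity --- that the time-averaged drift of the gap is bounded above by a negative constant uniformly in the starting interface --- is the main obstacle; once it is in place, the supermartingale estimate and the excursion union bound of the previous two steps complete the proof.
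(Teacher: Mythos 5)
Your proposal is correct and takes essentially the same route as the paper's proof: decompose $[0, cN)$ into excursions of the gap process away from small values, show that a single excursion started at $G=2$ reaches level $\sqrt N$ before returning below $2$ with probability at most $\exp(-\gamma_5 \sqrt N)$, and conclude with a union bound over the number of excursions, which is of order $N$ with exponentially high probability because the gap jumps at rate at most $8$ (a Poisson large deviation bound in the paper). The only divergence is the technical device behind the per-excursion estimate---and you correctly singled out the on-axis states, where Lemma \ref{lem:drift-2} gives only $D(G_t) \leq 0$, as the crux: you restore a strictly negative drift on all of $\{G_t \geq 2\}$ via a bounded Lyapunov correction $\psi$ and apply Doob's inequality to the exponential supermartingale $e^{\lambda (G + \psi)}$, whereas the paper instead observes that the interface spends an asymptotically positive fraction $b > 0$ of its time off the axes, so that $\liminf_{t \to \infty} E\,[D(G_t) \mid G_t \geq 2] \leq -2b < 0$, and then invokes standard large deviation estimates.
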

\begin{proof}
%  The strategy is similar to the one in the proof of Lemma \ref{lem:deviation-1}.
 For all times $s > 0$, we introduce the two stopping times
 $$ T^- (s) \ = \ \inf \,\{t > s : G_t \leq 1 \} \quad \hbox{and} \quad T^+ (s) \ = \ \inf \,\{t > s : G_t > \sqrt N \}. $$
 Since each time the slice process visits an interface such that $X_t^+ X_t^- = 0$ there is a strictly positive probability that $X_t^+ X_t^- \neq 0$
 at the next jump, we have
 $$ \liminf_{t \to \infty} \ P \,(X_t^+ X_t^- \neq 0 \ | \ G_t \geq 2) \ = \ b \ > \ 0. $$
 This, together with Lemma \ref{lem:drift-2}, gives
 $$ \liminf_{t \to \infty} \ E \,[D (G_t) \ | \ G_t \geq 2] \ \leq \ -2 \ \liminf_{t \to \infty} \,P \,(X_t^+ X_t^- \neq 0 \ | \ G_t \geq 2) \ = \ -2 b \ < \ 0 $$
 therefore, standard large deviation estimates imply that
\begin{equation}
\label{eq:deviation-1}
 P \,(T^+ (s) < T^- (s) \ | \ G_s = 2) \ \leq \ C_5 \,\exp (- \gamma_5 \sqrt N)
\end{equation}
 for suitable constants $C_5 < \infty$ and $\gamma_5 > 0$.
 Let $v_t (2)$ and $J_t$ denote respectively the number of times the gap process visits state 2 and the number of times it jumps
 by time $t$.
 Since the process jumps at rate at most 8, large deviation estimates for the Poisson distribution imply that
\begin{equation}
\label{eq:deviation-2}
 P \,(v_{cN} (2) > 5 cN) \ \leq \ P \,(J_{cN} > 10 cN) \ \leq \ C_6 \,\exp (- \gamma_6 N)
\end{equation}
 for appropriate $C_6 < \infty$ and $\gamma_6 > 0$.
 Combining \eqref{eq:deviation-1} and \eqref{eq:deviation-2}, we obtain
 $$ \begin{array}{rcl}
     P \,(G_t > \sqrt N \ \hbox{for some} \ t < cN) & \leq &
     P \,(v_{cN} (2) > 5 cN) \ + \ 5 cN \ P \,(T^+ (s) < T^- (s) \ | \ G_s = 2) \vspace{4pt} \\ & \leq &
     C_6 \,\exp (- \gamma_6 N) \ + \ 5 cN \times C_5 \,\exp (- \gamma_5 \sqrt N), \end{array} $$
 which proves the lemma.
\end{proof}

\begin{corol} --
\label{cor:drift}
 There exist $C_7 < \infty$ and $\gamma_7 > 0$ such that, for $x_2 = -1, 0, 1$,
 $$ P \,(X_{cN} (x_2) \leq 2N) \ + \ P \,(X_t (x_2) \leq -N \ \hbox{for some} \ t < cN) \ \leq \ C_7 \,\exp (- \gamma_7 \sqrt N) $$
\end{corol}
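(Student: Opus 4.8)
The plan is to derive the corollary from Lemmas \ref{lem:deviation-1} and \ref{lem:deviation-2} by a purely deterministic comparison between each component $X_t(x_2)$, the sum $\Sigma_t$, and the gap $G_t$. First I would recall that $X_t(1) = X_t(0) + X_t^+$ and $X_t(-1) = X_t(0) + X_t^-$, so that
$$ \Sigma_t \ = \ 3 \,X_t(0) + X_t^+ + X_t^- \qquad \hbox{and hence} \qquad X_t(0) \ = \ \frac{1}{3} \,(\Sigma_t - X_t^+ - X_t^-). $$
Since $X_t^+ + X_t^- \leq |X_t^+| + |X_t^-| = G_t$, this gives $X_t(0) \geq \frac{1}{3} \,(\Sigma_t - G_t)$. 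The smallest of the three components is $\min_{x_2} X_t(x_2) = X_t(0) + \min (0, X_t^+, X_t^-)$, and $\min (0, X_t^+, X_t^-) \geq - \max (|X_t^+|, |X_t^-|) \geq -G_t$, so combining the two bounds yields the key deterministic inequality
$$ \min_{x_2 \in \{-1, 0, 1 \}} \,X_t(x_2) \ \geq \ \frac{1}{3} \,\Sigma_t \ - \ \frac{4}{3} \,G_t \qquad \hbox{for all} \ t \geq 0. $$

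Next I would feed the two events controlled by the preceding lemmas into this inequality. On the event $\{\Sigma_{cN} \geq 7N \} \cap \{G_{cN} \leq \sqrt N \}$ the bound gives $X_{cN}(x_2) \geq \frac{1}{3} \,(7N - 4 \sqrt N)$, which exceeds $2N$ as soon as $N \geq 16$; hence, for such $N$,
$$ P \,(X_{cN}(x_2) \leq 2N) \ \leq \ P \,(\Sigma_{cN} < 7N) \ + \ P \,(G_{cN} > \sqrt N). $$
Similarly, on the event $\{\Sigma_t \geq -N \ \hbox{for all} \ t < cN \} \cap \{G_t \leq \sqrt N \ \hbox{for all} \ t < cN \}$ the inequality gives $X_t(x_2) \geq - \frac{1}{3} \,(N + 4 \sqrt N) \geq -N$ uniformly in $t < cN$ once $N \geq 4$, so for such $N$,
$$ P \,(X_t(x_2) \leq -N \ \hbox{for some} \ t < cN) \ \leq \ P \,(\Sigma_t < -N \ \hbox{for some} \ t < cN) \ + \ P \,(G_t > \sqrt N \ \hbox{for some} \ t < cN). $$

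Finally I would add the two displays and recognize the right-hand sides as exactly the quantities bounded in Lemmas \ref{lem:deviation-1} and \ref{lem:deviation-2}: the two $\Sigma$-terms together are at most $C_1 \,\exp (-\gamma_1 N)$, while each $G$-term is at most $C_4 \,\exp (- \gamma_4 \sqrt N)$, the endpoint bound $P \,(G_{cN} > \sqrt N)$ being absorbed into the uniform-in-time estimate since $G$ is a pure-jump integer-valued process and a jump lands exactly at $t = cN$ with probability zero. Since $\exp (-\gamma_1 N) \leq \exp (-\gamma_1 \sqrt N)$ for $N \geq 1$, the total is bounded by $C_7 \,\exp (-\gamma_7 \sqrt N)$ with $\gamma_7 = \min (\gamma_1, \gamma_4)$ and $C_7$ chosen large enough to also absorb the finitely many values $N < 16$. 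I do not anticipate a genuine obstacle here: all the probabilistic content already sits in the two deviation lemmas, and the only point requiring care is the sign bookkeeping in the inequality $\min_{x_2} X_t(x_2) \geq \frac{1}{3} \,\Sigma_t - \frac{4}{3} \,G_t$, which rests on $X_t(0) = \frac{1}{3} \,(\Sigma_t - X_t^+ - X_t^-)$ together with $X_t^+ + X_t^- \leq G_t$ and $\min (0, X_t^+, X_t^-) \geq -G_t$.
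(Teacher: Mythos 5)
Your proposal is correct and follows essentially the same route as the paper: a deterministic inequality bounding each component $X_t(x_2)$ from below by a linear combination of $\Sigma_t$ and $G_t$ (the paper uses $\Sigma_t \leq 3\,X_t(x_2) + 2\,G_t$, i.e., coefficient $\tfrac{2}{3}$ on $G_t$ instead of your $\tfrac{4}{3}$, but either suffices), followed by applying Lemmas \ref{lem:deviation-1} and \ref{lem:deviation-2} to the two events and absorbing constants. Your explicit treatment of the endpoint bound $P\,(G_{cN} > \sqrt N)$ via the pure-jump structure is a detail the paper glosses over, but it does not change the nature of the argument.
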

\begin{proof}
 This follows directly from the previous lemmas.
 First, since
 $$ \Sigma_t \ = \ X_t (-1) + X_t (0) + X_t (1) \ \leq \ 3 X_t (x_2) + 2 G_t  \ \ \hbox{for all $t \geq 0$ and $x_2 = -1, 0, 1$}, $$
%  $$ X_t (x_2) \ \geq \ \frac{1}{3} \ (\Sigma_t - 2 Z_t) \ \ \hbox{for all $t \geq 0$ and $x_2 = -1, 0, 1$}. $$
 a straightforward application of Lemmas \ref{lem:deviation-1} and \ref{lem:deviation-2} gives
 $$ P \,(X_{cN} (x_2) \leq 2N) \ \leq \ P \,(\Sigma_{cN} < 7N) \ + \ P \,(G_{cN} > \sqrt N) \ \leq \ C_8 \,\exp (- \gamma_8 \sqrt N) $$
 for suitable $C_8 < \infty$ and $\gamma_8 > 0$ and all $N$ sufficiently large. Similarly,
 $$ \begin{array}{l}
     P \,(X_t (x_2) \leq - N \ \hbox{for some} \ t < cN) \ \leq \
     P \,(\Sigma_t < - N \ \hbox{for some} \ t < cN) \vspace{4pt} \\ \hspace{120pt} + \
     P \,(G_t > \sqrt N \ \hbox{for some} \ t < cN) \ \leq \ C_9 \,\exp (- \gamma_9 \sqrt N) \end{array} $$
 for suitable $C_9 < \infty$ and $\gamma_9 > 0$ and all $N$ sufficiently large.
\end{proof} \\ \\
 To complete the proof of Theorem \ref{th:even-2D}, we now return to the majority rule model.
 To compare the process properly rescaled in space and time with oriented site percolation, we let $T = c N$ where $c$ is the positive constant
 introduced in Lemma \ref{lem:deviation-1}, and define for all $w \in \Z^2$
 $$ B_w \ = \ (2N + 1) \,w + [-N, N]^2 \quad \hbox{and} \quad \mathcal G \ = \ \{(w, j) \in \Z^2 \times \Z_+ : w_1 + w_2 + j \ \hbox{is even} \}. $$
 Site $(w, j) \in \mathcal G$ is said to be good whenever all vertices in $B_w$ are in state 1 at time $jT$ for the original majority rule model.
 Then, we have the following lemma.
\begin{lemma} --
\label{lem:perco}
 For all $N$ sufficiently large,
 $$ P \,((e_1, 1) \ \hbox{is not good} \ | \ (0, 0) \ \hbox{is good}) \ \leq \ C_7 \,(6N - 3) \,\exp (- \gamma_7 \sqrt N). $$
\end{lemma}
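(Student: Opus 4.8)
The plan is to deduce the lemma from Corollary \ref{cor:drift} by stochastic domination, covering the target block $B_{e_1}$ with $2N - 1$ overlapping horizontal slices of height three and applying the corollary to each of the three levels of every slice. Recall that $B_0 = [-N, N]^2$ and $B_{e_1} = [N + 1, 3N + 1] \times [-N, N]$, so that conditioning on $(0,0)$ being good means every vertex of $B_0$ is of type 1 at time 0, while $(e_1, 1)$ being good means every vertex of $B_{e_1}$ is of type 1 at time $T = cN$.

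I would first introduce, for each center $k \in \{-N + 1, \ldots, N - 1\}$, a copy of the slice process living on the rows $\{k - 1, k, k + 1\}$, recentered horizontally so that its local origin sits at $x_1 = N$, the right edge of $B_0$; thus its local coordinate $s$ corresponds to the column $x_1 = N + s$, and it is initialized with type 1 on the local half-line $(-\infty, 0]$. As established in Section \ref{sec:even-2D}, such a slice process is stochastically dominated by the majority rule model driven by the same graphical representation and started from the same configuration; since $(0,0)$ is good, the three rows of each slice are of type 1 throughout $x_1 \in [-N, N]$ at time 0, so the two initial configurations agree on $\{x_1 \geq -N\}$ and differ only on $\{x_1 < -N\}$. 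I would then read off from Corollary \ref{cor:drift}, applied to each level $x_2 \in \{-1, 0, 1\}$ of each of the $2N - 1$ slices, that with probability at least $1 - C_7 \exp(-\gamma_7 \sqrt N)$ the rightmost type-1 vertex on that level reaches local position $> 2N$, hence column $x_1 \geq 3N + 1$, by time $T$, while never retreating behind local position $-N$, that is behind column $x_1 = 0$.

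On each such success event the corresponding row is of type 1 on all of $(-\infty, 3N + 1]$, hence covers the horizontal extent $[N + 1, 3N + 1]$ of $B_{e_1}$; and since every row of $B_{e_1}$ occurs as one of the three levels of at least one slice, the intersection over all $3(2N - 1) = 6N - 3$ level-events forces every vertex of $B_{e_1}$ to be of type 1 at time $T$ for the slice processes, and a fortiori for the dominating majority rule model. Taking complements and applying a union bound over these $6N - 3$ events, each controlled by Corollary \ref{cor:drift} and each insensitive to the conditioning on $(0,0)$ good (the slice processes depend only on the graphical representation), yields the claimed estimate $C_7 (6N - 3) \exp(-\gamma_7 \sqrt N)$.

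The delicate point, which I expect to be the main obstacle, is the validity of the slice-process lower bound given that $(0,0)$ good only guarantees type 1 on the finite block $B_0$ rather than on an entire half-line: the slice process must be started from type 1 on $(-\infty, 0]$ for its monotone \emph{no 1 to the right of a 0} structure to be preserved, yet we have no control on the configuration for $x_1 < -N$. This is exactly where the retreat term $P(X_t \leq -N \ \hbox{for some} \ t < cN)$ in Corollary \ref{cor:drift} is used. On the complementary confinement event the interface never enters $\{x_1 \leq 0\}$, so this region remains entirely of type 1 and no disagreement ever originates in, or propagates out of, the artificial boundary data imposed on $\{x_1 < -N\}$ during $[0, T]$; consequently the interface evolution depends only on the graphical representation in $\{x_1 \geq 0\}$, where the slice configuration genuinely lies below the majority rule model, so the domination and hence the coverage conclusion are justified on the success event. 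Making this localization rigorous is the crux; once it is in place, the counting and the union bound are routine.
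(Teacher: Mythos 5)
Your outer structure---covering $B_{e_1}$ by the $2N-1$ overlapping height-three slices, invoking Corollary \ref{cor:drift} once per level, and closing with a union bound over the $3\,(2N-1) = 6N-3$ level events---is exactly the paper's, but the core domination step is not, and the difficulty you yourself flag as ``the crux'' is a genuine gap that your confinement argument does not close. With a single rightward slice process per level, the initial configurations are not ordered: the slice process carries 1's on $\{x_1 < -N\}$ where $\eta_0 = B_0$ carries 0's, so there is no inclusion at time $0$ to propagate. The confinement event cannot restore it, because it constrains the \emph{slice process} only and says nothing about $\eta$. The inclusion ``slice $\subset \eta$ on $\{x_1 \geq 0\}$'' is destroyed whenever $\eta$ flips to all 0 a block in which the slice process keeps 1's, and such flips are driven by the 0's of $\eta$ \emph{outside} $B_0$, which your argument never controls. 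Concretely, for the top slice the $2 \times 2$ block containing the corner $(-N, N)$ of $B_0$ and three outside vertices flips to all 0 at rate one; the slice process is unchanged by this update (the flip to 1 is suppressed since the block is not contained in the slice), so a discrepancy (slice $= 1$, $\eta = 0$) appears at a column $\geq -N$ immediately, while the slice interface has not retreated at all. Discrepancies then spread by block adjacency each time a block containing one is updated, and over the horizon $T = cN$, where $c = 7a^{-1}$ is a large constant, no finite-propagation-speed bound confines them to $\{x_1 < 0\}$, let alone keeps them out of $B_{e_1}$. Bounding this erosion of $\eta$ from below is precisely what the lemma is meant to prove, so the localization you appeal to is circular.

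The paper's resolution is the idea missing from your sketch: for each center $z$ it runs \emph{two} slice processes, the rightward $r_t^z$ and its mirror image $l_t^z$ obtained by reflection through the vertical axis, and it dominates $\eta$ from below by the \emph{intersection} $r_t^z \cap l_t^z$. This intersection starts equal to $[-N, N] \times \{z-1, z, z+1\} \subset B_0$, so it genuinely lies below $\eta_0$, and on the event \eqref{eq:perco-2} that neither interface ever retreats past the center column the inclusion $r_t^z \cap l_t^z \subset \eta_t$ is preserved through every Poisson event: on that event, every $2 \times 2$ block lies entirely inside $(-\infty, 0]$ columnwise (hence inside $r_t^z$) or entirely inside $[0, \infty)$ columnwise (hence inside $l_t^z$), so if both $r^z$ and $l^z$ have at least two vertices of state 1 in a block then so does $r^z \cap l^z$, and therefore so does $\eta$, which then cannot flip that block to 0. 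In this way the artificial half-line of 1's of each slice process is hidden behind the 0's of its mirror image, and no discrepancy-propagation estimate is ever needed. Once this bidirectional construction is in place, your counting and union bound go through essentially verbatim, with the non-retreat events for both $r^z$ and $l^z$ and the advance event for $r^z$ all supplied by Corollary \ref{cor:drift}.
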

\begin{proof}
 The idea is to observe that the majority rule model is stochastically larger than a certain union of bidirectional slice processes.
 More precisely, for all integers $z \in \Z$, we let $r_t^z$ denote the process obtained from the majority rule model in the same manner as the
 slice process introduced above but applying the translation of vector $(N, z)$ to both the evolution rules and the initial configuration.
 Also, we let $l_t^z$ denote the process obtained from $r_t^z$ by applying the symmetry about the vertical axis to the evolution rules and the
 initial configuration.
 In particular, we have
\begin{equation}
\label{eq:perco-1}
 r_t^z \ \stackrel{d}{=} \ \bar \eta_t + (N, z) \quad \hbox{and} \quad l_t^z \ \stackrel{d}{=} \ - \bar \eta_t + (-N, z) \quad \hbox{for all} \ z \in \Z
\end{equation}
 where $\stackrel{d}{=}$ means equal in distribution.
 Having the majority rule model and all these processes constructed from the same collection of independent rate one Poisson processes, and
 identifying each spin system with its set of vertices in state 1, on the event that
\begin{equation}
\label{eq:perco-2}
 (- \infty, 0] \times \{z - 1, z, z + 1 \} \,\subset \,r_t^z \quad \hbox{and} \quad [0, + \infty) \times \{z - 1, z, z + 1 \} \,\subset \,l_t^z
\end{equation}
 for all $t \leq T$ and all $z \in \{- (N - 1), \ldots, N - 1 \}$, we have
\begin{equation}
\label{eq:perco-3}
 \bigcup_{z = - (N - 1)}^{N - 1} (r_T^z \cap l_T^z) \,\subset \,\eta_T \quad \hbox{provided} \quad \eta_0 \ = \bigcup_{z = - (N - 1)}^{N - 1} (r_0^z \cap l_0^z) \ = \ B_0.
\end{equation}
 Combining \eqref{eq:perco-1}-\eqref{eq:perco-3} with Corollary \ref{cor:drift}, we obtain
 $$ \begin{array}{l}
  P \,((e_1, 1) \ \hbox{is not good} \ | \ (0, 0) \ \hbox{is good}) \ \leq \
  P \,(B_{e_1} \not \subset \eta_T \ | \ \eta_0 = B_0) \vspace{4pt} \\ \hspace{25pt} \leq \
  P \,((- \infty, 0] \times \{z - 1, z, z + 1 \} \not \subset r_t^z \ \hbox{for some} \ (z, t) \in \{- (N - 1), \ldots, N - 1 \} \times (0, T)) \vspace{4pt} \\ \hspace{50pt} + \
  P \,((- \infty, 3N] \times \{z - 1, z, z + 1 \} \not \subset r_T^z \ \hbox{for some} \ z \in \{- (N - 1), \ldots, N - 1 \}) \vspace{4pt} \\ \hspace{25pt} \leq \ (2N - 1) \times
  P \,((- \infty, -N] \times \{- 1, 0, 1 \} \not \subset \bar \eta_t \ \hbox{for some} \ t \in (0, T)) \vspace{4pt} \\ \hspace{50pt} + \ (2N - 1) \times
  P \,((- \infty, 2N] \times \{- 1, 0, 1 \} \not \subset \bar \eta_{cN}) \ \leq \ C_7 \,(6N - 3) \,\exp (- \gamma_7 \sqrt N) \end{array} $$
 for all $N$ sufficiently large, as desired.
\end{proof} \\ \\
 Since the probability in the statement of Lemma \ref{lem:perco} can be made arbitrarily small by choosing the parameter $N$
 sufficiently large, Theorem 4.3 in Durrett \cite{durrett_1995} implies that the set of good sites dominates stochastically the set of
 wet sites of an oriented site percolation process on $\mathcal G$ where sites are open with probability arbitrarily close to one.
 This only proves survival of the type 1 opinion since the percolation process has a positive density of closed sites, and thus a
 positive density of dry sites, i.e., sites which are not wet.
 To conclude, we apply Lemma 15 of \cite{lanchier_2012b}, which relies on ideas from Durrett \cite{durrett_1992} and proves the lack
 of percolation of the dry sites for oriented percolation on a certain directed graph with vertex set $\mathcal G$ when the density
 of open sites is large enough.
 This lemma and the construction given in its proof imply the existence of an in-all-directions expanding region which is void of
 vertices in state 0, so opinion 1 indeed outcompetes opinion 0 when starting with infinitely many $2 \times 2$ squares in state 1.
 This completes the proof of Theorem \ref{th:even-2D}.

%%%%%%%%%%%%%%%%%%%%%%%%%%%%%%%%%%%%%%%%%%%%%%%%%%%%%%%%%%%%%%%%%%%%%%%%%%%%%%%%%%%%%%%%%%%%%%%%%%%%%%%%%%%%%%%%%%%%%%%%%%%%%%%%%%%%%%%%%%

\section{Proof of Theorem \ref{th:odd-2D} ($d = 2$ and $n = 3$)}
\label{sec:odd-2D}

\indent This last section is devoted to the proof of Theorem \ref{th:odd-2D}, which relates the variation rate of the number
 of type 1 vertices to the number of positive and negative corners when the configuration is a regular cluster.
 Recall that configuration $\eta$ is a regular cluster whenever
\begin{enumerate}
 \item[(R0)] the contour $\Gamma$ as defined in Section \ref{sec:results} is a Jordan curve, \vspace{4pt}
 \item[(R1)] the set $(x + D_2) \cap \Gamma$ is connected for all $x \in \D^2$ and \vspace{4pt}
 \item[(R2)] if vertex $x$ is a corner and $(x + D_2) \cap (y + D_2) \neq \varnothing$ then vertex $y$ is not a corner,
\end{enumerate}
 where $D_2 = [- 1, 1]^2$.
 The proof is divided into three steps:
 we first prove a geometric property of regular clusters, then establish the theorem in the particular case when there
 is no positive corner nor negative corner, and finally combine these two results to obtain the full theorem.

\begin{figure}[t!]
\centering
\includegraphics[width=180pt]{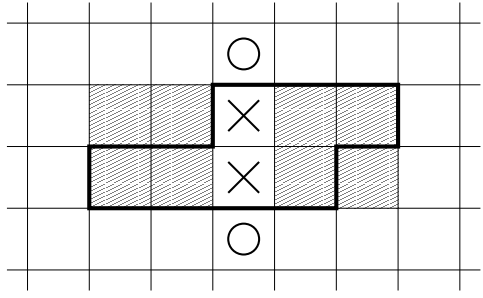}
\caption{\upshape{Picture related to the proof of Lemma \ref{lem:tictactoe}}}
\label{fig:tictactoe}
\end{figure}

\begin{lemma} --
\label{lem:tictactoe}
 Assume that $\eta$ is a regular cluster with at least 11 vertices.
 If two nearest neighbors, say vertex $x$ and vertex $x + e_i$ for some $i \in \{1, 2 \}$, are in different states then
 $$ \eta (x - 2 e_i) = \eta (x - e_i) = \eta (x) \neq \eta (x + e_i) = \eta (x + 2 e_i) = \eta (x + 3 e_i). $$
\end{lemma}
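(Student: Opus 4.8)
The plan is to translate the statement into a property of the contour $\Gamma$ and then to forbid ``short runs'' using the three regularity conditions. By the symmetry between $e_1$ and $e_2$ we may assume $i = 1$, and after a translation we may take $x = (0,0)$, so the hypothesis reads $\eta (0,0) = a \neq \eta (1,0) = 1 - a$ for some $a \in \{0,1\}$. The open dual edge $f$ separating $(0,0)$ from $(1,0)$ -- the vertical segment at abscissa $1/2$ crossing the row $\{x_2 = 0\}$ -- then lies on $\Gamma$. The reflection through the line $\{x_1 = 1/2\}$ composed with the exchange of the two colours is a lattice isometry that preserves regular clusters and swaps the two halves of the conclusion, so it suffices to prove the two left-hand equalities $\eta (-1,0) = \eta (0,0)$ and $\eta (-2,0) = \eta (0,0)$; the three right-hand equalities then follow. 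I will use two consequences of (R0) throughout: since $\Gamma$ is a single Jordan curve it has no branch point, so every vertex of $\D^2$ is incident to exactly $0$ or $2$ edges of $\Gamma$ (equivalently, no $2 \times 2$ block of $\Z^2$ is coloured like a checkerboard); and no vertex is chromatically isolated, since an isolated vertex would create a second loop while $\eta$ has at least $11 > 1$ vertices. Note finally that $\eta (-1,0) = \eta (0,0)$ is equivalent to the absence of a contour edge at abscissa $-1/2$ crossing $\{x_2 = 0\}$, and $\eta (-2,0) = \eta (0,0)$ to the absence of one at abscissa $-3/2$, so the left-hand conclusion asserts that $\Gamma$ does not cross $\{x_2 = 0\}$ again within horizontal distance $2$ to the left of $f$.

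\textbf{Step 1 (no immediate flip-back).} Suppose for contradiction that $\eta (-1,0) = 1 - a$. Then $f$ and the open edge $f_L$ at abscissa $-1/2$ are two parallel contour edges one unit apart flanking the single cell $(0,0)$. Tracing $\Gamma$ upward and downward out of $f$ and $f_L$ by means of the no-branch rule, either both strands exit the dual box $(1/2,1/2) + D_2$ and its mirror image below without meeting -- in which case $\Gamma \cap ((1/2,1/2) + D_2)$ is a pair of disjoint arcs, contradicting (R1) -- or the two strands close up inside the window, so that $(0,0)$ is the tip of a width-one finger. In the latter case a finite local inspection produces two corners at $\| \cdot \|_\infty$-distance at most $2$, contradicting (R2). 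Either way $\eta (-1,0) = \eta (0,0)$.

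\textbf{Step 2 (the run reaches length three).} Now assume $\eta (-1,0) = \eta (0,0) = a$ and, for contradiction, $\eta (-2,0) = 1 - a$, so that the open edge $f'$ at abscissa $-3/2$ and $f$ are two parallel contour edges exactly two units apart flanking the block $\{(-1,0),(0,0)\}$. The point is that the box $(-1/2,1/2) + D_2$ has dual width $2$ and hence meets both $f'$ and $f$: if the two strands run straight through this box and its reflection below they form disjoint arcs and (R1) fails, while if one wall caps or bends inside the window the no-branch rule forces this to happen within distance $2$, again producing two corners within $\| \cdot \|_\infty$-distance $2$ and contradicting (R2). Hence $\eta (-2,0) = \eta (0,0)$, and together with Step 1 and the symmetry reduction this yields the lemma.

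\textbf{Main obstacle.} The delicate part is the local analysis underlying the ``otherwise'' branches of Steps 1 and 2, namely showing that whenever (R1) does not separate the two parallel strands -- because they turn away from each other and reconnect, for instance where a narrow finger widens into the bulk of the cluster -- a pair of \emph{genuine} corners at sup-distance at most $2$ is nonetheless forced. This is subtle because the geometric turns of $\Gamma$ do not coincide with the diagonal notion of corner used in (R2): a thin feature can carry geometric corners that the diagonal definition fails to flag, so one must track the actual colours of the diagonal neighbours rather than the shape of $\Gamma$ alone. Organizing this finite but substantial case analysis, and using the hypothesis of at least $11$ vertices to guarantee enough room around $f$ to exclude competing boundary effects from the rest of the cluster, is where I expect the bulk of the work to lie.
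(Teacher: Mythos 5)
Your symmetry reduction and the first branch of your dichotomy (disjoint arcs in the window adjacent to the flanked cell, contradicting (R1)) are fine, but the second branch --- ``the strands close up, hence a finite local inspection produces two corners at sup-distance at most $2$, contradicting (R2)'' --- is where the whole proof lives, and it is not a repairable technicality: the corners you are counting on simply do not exist. Test your Step 1 on a one-cell bump on a flat wall, i.e.\ $\eta (z) = 1$ if and only if $z_1 \leq -1$ or $z = 0$ (truncated far away into a huge rectangle so the cluster is finite; its only corners are then the four rectangle corners, pairwise far apart). Here $0$ and $e_2$ disagree while $\eta (-e_2) \neq \eta (0)$, so the contradiction hypothesis of your Step 1 holds, and the two strands do close up around the bump; yet a direct check of the diagonal definition shows that \emph{no} vertex anywhere near the bump is a corner: for each such vertex, every diagonal pair contains either one vertex of each type or two vertices of the center's own type. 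Thus (R0) and (R2) hold near the bump, and one can also check that $\Gamma$ intersected with each of the four windows adjacent to the flanked cell is connected, so neither branch of your dichotomy produces a contradiction. The same happens for the one- and two-cell notches relevant to Steps 1 and 2. What actually excludes these configurations is (R1) applied at windows your argument never inspects: the window lying \emph{behind} the bump (here $(-3/2,1/2) + D_2$), respectively just \emph{outside} the notch, meets $\Gamma$ in an arc plus one isolated dual point, hence in a disconnected set. The phenomenon you flag as your ``main obstacle'' --- that the diagonal notion of corner is blind to thin geometric features --- is therefore fatal to the (R2)-route, not an organizational burden.

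This is exactly where your outline diverges from the paper, whose proof of this lemma never uses (R2) at all. The paper places the two (R1)-windows \emph{beyond the two extremities} of the pair of parallel contour segments bounding the short run, so that the connecting paths forced by (R1) cannot recycle the segments themselves; those two paths together with the segments then contain a closed curve $\gamma \subset \Gamma$, and since $\Gamma$ is a Jordan curve this forces $\gamma = \Gamma$ (or (R1) fails outright, as in the bump and notch examples). Consequently $\Gamma$ is trapped in the union of two $2 \times 2$ dual windows and the two segments, a region that can enclose at most $10$ vertices, contradicting the hypothesis of at least $11$. Note that this is the only place where the constant $11$ does real work; your sketch never uses it in an essential way, which is itself a warning sign, since a single-vertex cluster satisfies (R0)--(R2) while violating the conclusion of the lemma.
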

\begin{proof}
 Accounting for the invariance of the problem by translation and rotation, it suffices to prove that each of the following four
 scenarios leads to a contradiction:
\begin{enumerate}
 \item $\eta (e_2) \neq \eta (0) = \eta (2e_2) = 1$ \vspace{4pt}
 \item $\eta (e_2) = \eta (2e_2) \neq \eta (0) = \eta (3e_2) = 1$ \vspace{4pt}
 \item $\eta (e_2) \neq \eta (0) = \eta (2e_2) = 0$ \vspace{4pt}
 \item $\eta (e_2) = \eta (2e_2) \neq \eta (0) = \eta (3e_2) = 0$
\end{enumerate}
 We focus on conditions 2 and 4 since the remaining two conditions can be excluded based on the same approach.
 Using the notations of the tic-tac-toe game by denoting each state by $\times$ or $\circ$ where $\times$ means either state 0
 or state 1 and $\circ$ means the other state, both conditions 2 and 4 result in the configuration of $\times$ and $\circ$
 given in Figure \ref{fig:tictactoe}, which implies the existence of two horizontal line segments of length one that must be
 subsets of the contour.
 According to (R1), the hatched square on the left of the picture must contain a path $\gamma_1$ that connects the left
 extremities of the two segments.
 Similarly, the right extremities must be connected by a path $\gamma_2$ contained in the hatched square on the right of the
 picture.
 The concatenation of the two segments and the two paths defines a Jordan curve $\gamma$.
 To conclude, we distinguish between the following two conditions.
\begin{enumerate}
 \item If $\times$ means state 1 then $\gamma \subsetneq \Gamma$ contradicting the fact that $\Gamma$ is a Jordan curve. \vspace{4pt}
 \item If $\times$ means state 0 then $\gamma = \Gamma$ indicating that the cluster contains at most 10 vertices, which again
  leads to a contradiction.
\end{enumerate}
 This completes the proof.
\end{proof}

\begin{figure}[t!]
\centering
\includegraphics[width=400pt]{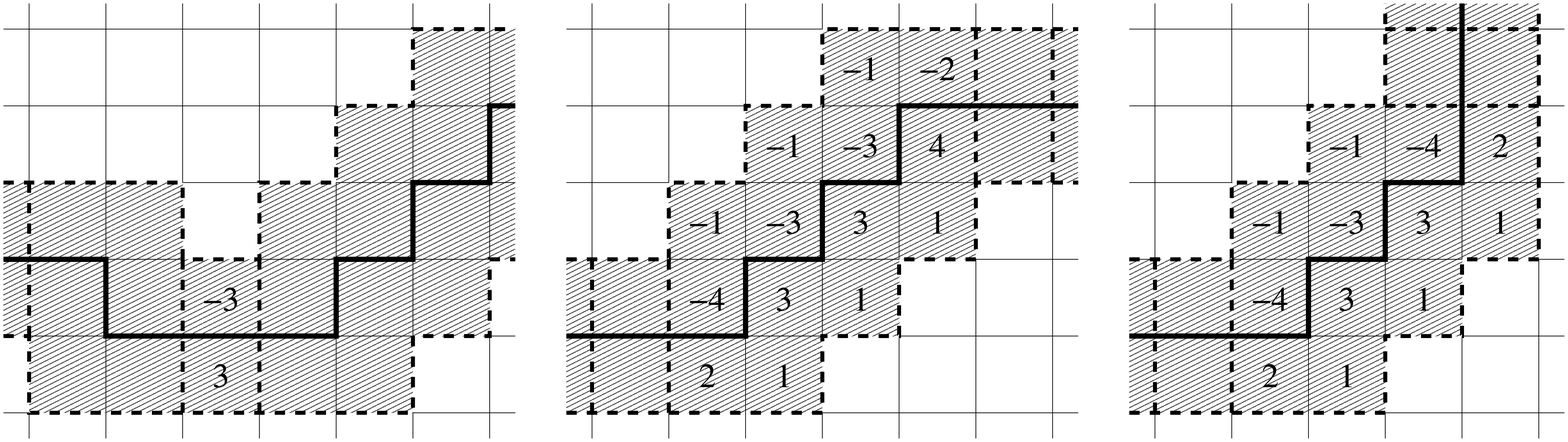}
\caption{\upshape{Pictures related to the proof of Lemma \ref{lem:neutral}}}
\label{fig:neutral}
\end{figure}

\begin{lemma} --
\label{lem:neutral}
 Theorem \ref{th:odd-2D} holds when $c_- = c_+ = 0$.
\end{lemma}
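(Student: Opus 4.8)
The plan is to reduce the identity to a pointwise \emph{majority-stability} property and then to read that property off from the absence of corners. First I would record the effect of a single update. Writing $m(x)=\card(\eta\cap B_x)$ for the number of type $1$ vertices in the three by three block $B_x=x+\{-1,0,1\}^2$ centered at $x$, the majority rule turns $B_x$ into the all-$1$ block when $m(x)\geq 5$ and into the all-$0$ block when $m(x)\leq 4$, so that $\phi(\eta,x)=9\,\ind\{m(x)\geq 5\}-m(x)$. Summing over $x$ and using that every vertex lies in exactly nine of the blocks $B_x$, so that $\sum_x m(x)=9\,\card(\eta)$, I obtain
$$ \sum_{x\in\Z^2}\phi(\eta,x)\ =\ 9\,\Big(\sum_{x\in\Z^2}\ind\{m(x)\geq 5\}\ -\ \card(\eta)\Big)\ =\ 9\sum_{x\in\Z^2}\big(g(x)-\eta(x)\big), $$
where $g(x)$ equals $1$ if the majority opinion in $B_x$ is $1$ and $0$ otherwise, and where every sum is finite because $\phi(\eta,x)=0$ as soon as $B_x$ is monochromatic. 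It therefore suffices to prove that $g(x)=\eta(x)$ for \emph{every} $x\in\Z^2$, that is, that the local majority rule reproduces the opinion of each center.

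The heart of the matter is the following claim: if $x$ is not a corner, then $B_x$ has a strict majority equal to $\eta(x)$. By the symmetry of the statement under exchanging the two opinions it is enough to treat $\eta(x)=1$ and to show that at most four of the eight neighbors of $x$ are of type $0$. I would bound the axis neighbors and the diagonal neighbors separately. For each direction $i\in\{1,2\}$, at most one of $x-e_i$ and $x+e_i$ can be of type $0$: indeed, if one of them were of type $0$ it would be in a different state from $x$, and Lemma \ref{lem:tictactoe} would then force the opposite neighbor to agree with $x$, i.e. to be of type $1$. Hence the four axis neighbors contribute at most two type $0$ vertices. For the two diagonal pairs $\{x-e_1-e_2,\,x+e_1+e_2\}$ and $\{x-e_1+e_2,\,x+e_1-e_2\}$, the definition of a corner says precisely that, $x$ not being a corner, neither pair can have both endpoints equal and different from $\eta(x)$, i.e. neither pair can be doubly of type $0$; so the four diagonal neighbors also contribute at most two type $0$ vertices. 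Altogether $x$ has at most four neighbors of type $0$, hence at least four of type $1$, and counting $x$ itself the block $B_x$ contains at least five type $1$ vertices, a strict majority. Thus $g(x)=1=\eta(x)$, and the opinion-exchanged computation gives $g(x)=0=\eta(x)$ when $\eta(x)=0$ and $x$ is not a corner.

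Since $c_-=c_+=0$ means that the cluster has no corner at all, the claim applies to every vertex, so $g\equiv\eta$ and the displayed identity collapses to $\sum_x\phi(\eta,x)=0$, which is the assertion of the lemma. I expect the only delicate point to be the correct deployment of Lemma \ref{lem:tictactoe} in the neighbor count: regularity, through that lemma, is exactly what excludes thin one-dimensional protrusions for which an axis pair could be doubly of type $0$, while the hypothesis of no corners excludes diagonal protrusions -- such as the tip of a diamond -- which are majority-unstable yet are not corners in the sense defined here. Everything else is bookkeeping; in particular no tie can ever occur since a three by three block has an odd number of vertices.
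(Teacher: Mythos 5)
Your proof is correct, but it takes a genuinely different route from the paper's. The paper argues along the contour: it orients the Jordan curve, uses Lemma \ref{lem:tictactoe} together with the absence of corners to show that the boundary consists only of long straight runs and regular staircases, and then partitions the support of $\phi (\eta, \cdot\,)$ into finite pieces $\Delta_j$ on which the values of $\phi$ cancel exactly, so the identity follows by summing over the partition. You instead combine a global double-counting identity, $\sum_x \phi (\eta, x) = 9 \sum_x (g (x) - \eta (x))$ with $g (x) = \ind \{m (x) \geq 5 \}$, with a pointwise stability statement: at every non-corner vertex the $3 \times 3$ majority agrees with the center, obtained by invoking Lemma \ref{lem:tictactoe} to bound the number of disagreeing axis neighbors by two and the very definition of a corner to bound the number of disagreeing diagonal neighbors by two. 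Your version is more self-contained (it avoids the construction and case analysis of the partition pieces, which in the paper leans on figures), and it scales better: the same identity reduces the full Theorem \ref{th:odd-2D} to checking that $g (x) - \eta (x)$ equals $-1$ at positive corners and $+1$ at negative corners, which would subsume the corner-flipping argument of Lemma \ref{lem:corner}. One small inaccuracy in your closing remark: the tip of a diamond, i.e. a vertex whose two vertical (or two horizontal) neighbors both disagree with it, is ruled out by Lemma \ref{lem:tictactoe}, hence by regularity, not by the no-corner hypothesis --- as you yourself observe, such a tip is not a corner, so the no-corner hypothesis is silent about it. Your actual proof handles this case correctly through the axis-pair bound, so the confusion is confined to the commentary and does not affect the argument.
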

\begin{proof}
 The proof relies on a geometric construction much easier to visualize than to explain so we refer the reader to Figure \ref{fig:neutral}
 for pictures that help to understand our approach.
 The basic idea is to define a partition $\{\Delta_1, \Delta_2, \ldots, \Delta_k \}$ of the support of $\phi (\eta, \cdot \,)$ such that
 the property to be proved holds for each member of the partition, i.e.,
\begin{equation}
\label{eq:neutral}
 \sum_{x \in \Delta_j} \ \phi (\eta, x) \ = \ 0 \quad \hbox{for all} \ j = 1, 2, \ldots, k.
\end{equation}
 First, we define an oriented contour embedded in the Jordan curve $\Gamma$ by letting
 $$ \{x_1, x_2, \ldots, x_m \} \ = \ \Gamma \,\cap \,\D^2 $$
 where $x_i$ is the $i$th point we encounter going around the curve starting from a given point and following a given orientation.
 To turn the Jordan curve into an oriented contour, we draw an arrow from vertex $x_i$ to vertex $x_j$ whenever $j = i + 1 \mod m$, in
 which case both vertices are nearest neighbors on the dual lattice.
 Then, we define
 $$ \Lambda \ = \ \bigcup_{i = 1}^m \ (x_i + D_2) \qquad \hbox{and} \qquad \Delta \ = \ \supp \,\phi (\eta, \cdot \,) \ = \ \Lambda \,\cap \,\Z^2. $$
 If the arrows $(x_{i - 1}, x_i)$ and $(x_i, x_{i + 1})$ are oriented in the same direction, we draw a segment line of length two
 centered at $x_i$ and perpendicular to the segment $(x_{i - 1}, x_{i + 1})$.
 This induces a partition of the set $\Lambda$ and a partition of the support $\Delta$, i.e.,
 $$ \Lambda \ = \ \bigcup_{j = 1}^k \ \Lambda_j \qquad \hbox{and} \qquad
    \Delta  \ = \ \bigcup_{j = 1}^k \ \Delta_j \ = \ \bigcup_{j = 1}^k \ (\Lambda_j \cap \Z^2) $$
 where the unions are disjoint.
 In Figure \ref{fig:neutral}, the sets $\Lambda_j$ are delimited by dashed lines.
 Now, thinking of the contour as a sequence of length $m$ consisting of four different types of arrows, a direct application
 of Lemma \ref{lem:tictactoe} implies that two consecutive vertical arrows going in opposite direction must be separated by at least
 three horizontal arrows all going in the same direction.
 The same holds by exchanging the role of vertical and horizontal arrows.
 Each sequence of $l \geq 3$ consecutive arrows oriented in the same direction induces $l - 2$ members of the partition of
 the support with exactly two vertices such that
 $$ \Delta_j \ = \ \{x, y \} \quad \hbox{with} \quad \phi (\eta, x) \ + \ \phi (\eta, y) \ = \ 3 \ - \ 3 \ = \ 0. $$
 See the first picture of Figure \ref{fig:neutral} for an illustration.
 Finally, the absence of positive and negative corners implies that the path $(\rightarrow, \rightarrow, \uparrow, \uparrow)$
 as well as the seven other paths deduced by symmetry or rotation are not allowed.
 In particular, accounting again for symmetry and rotation, for all $j$ such that $\card \Delta_j \neq 2$, we must have
 $$ \Lambda_j \,\cap \,\Gamma \ = \ (\rightarrow, \uparrow, \rightarrow, \uparrow, \cdots, \rightarrow, \uparrow) \qquad \hbox{or} \qquad
    \Lambda_j \,\cap \,\Gamma \ = \ (\rightarrow, \uparrow, \rightarrow, \uparrow, \cdots, \rightarrow, \uparrow, \rightarrow) $$
 where both paths have length at least three, both paths must be preceded by a $\rightarrow$, the first path must be followed by a $\uparrow$
 and the second path must be followed by a $\rightarrow$.
 Such paths are represented in the right hand side of Figure \ref{fig:neutral} which gives the values of $\phi (\eta, \cdot \,)$ in the
 set $\Delta_j$ and shows that \eqref{eq:neutral} is indeed satisfied.
 Since the sets $\Delta_j$ form a partition, the proof is complete.
\end{proof}

\begin{figure}[t!]
\centering
\includegraphics[width=280pt]{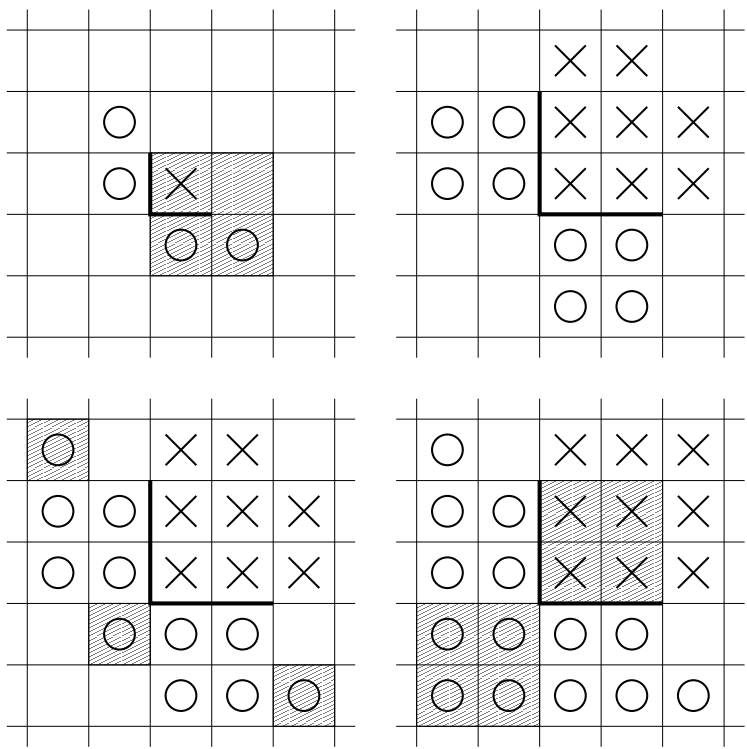}
\caption{\upshape{Pictures related to the proof of Lemma \ref{lem:corner}}}
\label{fig:corner}
\end{figure}

\begin{lemma} --
\label{lem:corner}
 Theorem \ref{th:odd-2D} holds for all values of $c_-$ and $c_+$.
\end{lemma}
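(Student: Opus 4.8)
The plan is to recycle the partition $\{\Delta_1, \ldots, \Delta_k\}$ of $\supp \phi (\eta, \cdot)$ constructed in the proof of Lemma \ref{lem:neutral}, and to show that the only blocks whose $\phi$-sum fails to vanish are those surrounding a corner, each positive corner contributing $-9$ and each negative corner $+9$. First I would use regularity condition (R2): since $(x + D_2) \cap (y + D_2) \neq \varnothing$ precisely when $\norm{x - y}_\infty \leq 2$, distinct corners lie at $\ell^\infty$-distance at least three, so no two corners can occur in the same, or in adjacent, members of the partition. This guarantees that each corner sits inside its own block and lets me treat corners one at a time, so that the blocks carrying a corner biject with the corners themselves.

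Next I would classify the blocks. Cutting the oriented contour at every straight point, exactly as in Lemma \ref{lem:neutral}, produces three kinds of block: the size-two pairs coming from a straight run of $l \geq 3$ arrows, for which $\phi (\eta, x) + \phi (\eta, y) = 3 - 3 = 0$; the monotone staircases $(\rightarrow, \uparrow, \rightarrow, \uparrow, \ldots)$, already shown in Lemma \ref{lem:neutral} to satisfy \eqref{eq:neutral}; and a new kind of block, appearing exactly where the contour turns a right-angle corner, i.e. where one of the paths forbidden in the neutral case, such as $(\rightarrow, \rightarrow, \uparrow, \uparrow)$, is now realized. The first two kinds contribute nothing to $\sum_x \phi (\eta, x)$, verbatim as in Lemma \ref{lem:neutral}, so only the corner blocks remain to be evaluated.

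It then remains to compute the $\phi$-sum over a single corner block. Here I would invoke Lemma \ref{lem:tictactoe} to pin down the local geometry: the run condition forces the contour to travel straight for at least three steps into and out of the corner, so, up to translation, rotation and the axial symmetry already exploited, only finitely many local pictures arise. For each, I would list the square centers $x$ with $\phi (\eta, x) \neq 0$ inside the block, read off whether the $3 \times 3$ square centered at $x$ carries a majority of type $1$ (so $\phi (\eta, x) = 9 - \card (\eta \cap (x + \{-1,0,1\}^2))$) or of type $0$ (so $\phi (\eta, x) = - \card (\eta \cap (x + \{-1,0,1\}^2))$), and sum. The outcome should be $-9$ at a positive (convex) corner and $+9$ at a negative (concave) one; the solid square, with its four positive corners and total variation $-36$, gives a convenient consistency check.

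Summing over the partition, the pairs and the monotone staircases drop out and each corner contributes its $\pm 9$, yielding $\sum_{x \in \Z^2} \phi (\eta, x) = 9 c_- - 9 c_+ = 9 (c_- - c_+)$. The main obstacle is the corner computation of the previous paragraph: one must be sure that (R1) and (R2) together with Lemma \ref{lem:tictactoe} genuinely reduce the corner geometry to a short explicit list, and that the block boundaries drawn at the neighboring straight points cleanly isolate the corner's $\phi$-contribution from that of the adjacent straight runs, so that no square center is double-counted or omitted across the cut.
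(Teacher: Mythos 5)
Your plan is correct, and it is a genuinely different route from the paper's. The paper does not touch the partition from Lemma \ref{lem:neutral} at all in the general case: it first pins down the configuration in the $5 \times 5$ square centered at each corner (via Lemma \ref{lem:tictactoe}, (R1) and (R2)), then performs a surgery, flipping the state of every corner to produce a configuration $\bar \eta$, and observes that none of the nine $3 \times 3$ squares containing a corner has exactly five majority-type vertices, so no majority is altered by the flip and $\phi (\eta, x) = \phi (\bar \eta, x) \pm 1$ on those nine squares; summing gives $\sum_x \phi (\eta, x) = \sum_x \phi (\bar \eta, x) + 9 (c_- - c_+)$, and since $\bar \eta$ is a corner-free regular cluster, Lemma \ref{lem:neutral} kills the first term. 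Your approach instead re-runs the partition argument on the cluster with its corners in place and evaluates the corner blocks directly; your predicted values ($-4, -2, -2, -1$ summing to $-9$ at a positive corner, the mirror image at a negative one) are indeed what the rectangle-corner geometry gives, and your identification of corners with turns flanked by straight runs (the realized forbidden paths) is the right bijection. The trade-off is instructive: the paper's flip trick only needs the \emph{weaker} consequence of the local characterization (no square with exactly five majority vertices), whereas you need the \emph{full} $4 \times 4$ picture around the turn to read off the four $\phi$-values, so you cannot escape essentially the same (R1)/(R2)/tic-tac-toe case analysis that constitutes the first half of the paper's proof; conversely, you avoid having to verify that the flipped configuration $\bar \eta$ is again a regular cluster with no corners, which the paper needs in order to invoke Lemma \ref{lem:neutral}. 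Two points you should make explicit to close your argument: first, you cannot cite Lemma \ref{lem:neutral} as a black box for the pair and staircase blocks, since it is stated under $c_- = c_+ = 0$; you must check that its local block computations are unaffected by corners elsewhere in the cluster (true, by locality of $\phi$ and the isolation provided by (R2) and Lemma \ref{lem:tictactoe}, but it needs saying). Second, your appeal to (R2) for the corner/block bijection is not quite the right mechanism: what isolates a corner into its own four-vertex block is that its turn is flanked by same-direction runs of length at least two on both sides (so that cuts occur immediately on either side), while multi-turn staircase blocks contain no corners at all; (R2) then only serves, together with tic-tac-toe, to rule out degenerate local pictures in the $4 \times 4$ evaluation.
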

\begin{proof}
 The first step is to characterize the configuration of the process in the $5 \times 5$ square centered at a positive or negative
 corner, which is illustrated in Figure \ref{fig:corner}.
 Following the notations introduced above, we denote both states by $\times$ and $\circ$ respectively.
 Since the problem is invariant by translation and rotation, we may assume without loss of generality that vertex 0 is a corner with
 $$ \eta (0) \ \neq \ \eta (- e_1 + e_2) \ = \ \eta (e_1 - e_2) $$
 as illustrated by the top left picture of the figure.
 We now prove that the configuration constructed step by step in the figure is indeed the only possible configuration.
 \begin{list}{\labelitemi}{\leftmargin=1em}
 \item[] {\bf Top left picture} --
  The top left corner and the bottom right corner of the unit square centered at vertex 0 must belong to the Jordan curve $\Gamma$.
  Invoking condition (R1), we deduce the existence of a path connecting these two points and included in the hatched square of the
  picture, which further implies that the Jordan curve must contain the top side or the left side of the unit square centered at 0.
  By symmetry, it must also contain the bottom side or the right side.
  A direct application of Lemma \ref{lem:tictactoe} shows that two opposite sides of the unit square cannot simultaneously be included
  in the curve $\Gamma$ so we may assume without loss of generality that the left and bottom sides of the unit square, drawn in thick
  lines in the picture, are included in the Jordan curve, which also determines the state of the vertices to the left of and under
  vertex 0. \vspace{4pt}
 \item[] {\bf Top right picture} --
  Applying Lemma \ref{lem:tictactoe} repeatedly from the previous picture gives the state of a total of 16 vertices included in the
  $5 \times 5$ square, as shown in this picture. \vspace{4pt}
 \item[] {\bf Bottom left picture} --
  Invoking condition (R2), the three hatched unit squares in this picture cannot be positive or negative corners, which forces
  their state to be $\circ$ rather than $\times$. \vspace{4pt}
 \item[] {\bf Bottom right picture} --
  The top right corner of the $2 \times 2$ hatched square on the left is a point of the Jordan curve $\Gamma$ therefore the presence
  of a $\times$ in this hatched square would contradict condition (R1).
  It follows that all four vertices in this square are $\circ$.
  Using a similar reasoning with the other hatched square, we prove that the vertex at the top right of the picture must be $\times$,
  while the remaining two vertices with no symbol can be of either type.
\end{list}
 To deduce the theorem, we let $\bar \eta$ denote the configuration obtained from $\eta$ by switching the state of vertex 0 and
 leaving the state of all other vertices unchanged.
 It is straightforward to check that, regardless of the type of the remaining two vertices, none of the nine $3 \times 3$ squares
 containing vertex 0 contains exactly five $\times$'s, indicating that the majority type in each of these nine square is not
 modified by switching the state of 0.
 In particular, each of the nine corresponding updates results in the same configuration that we start from $\eta$ or from
 $\bar \eta$ which further implies that
\begin{equation}
\label{eq:corner}
  \phi (\eta, x) \ = \ \left\{\hspace{-3pt}
 \begin{array}{rcl}
  \phi (\bar \eta, x) + 1 & \hbox{if} & \eta (0) = 0 \vspace{3pt} \\
  \phi (\bar \eta, x) - 1 & \hbox{if} & \eta (0) = 1
 \end{array} \right.
\end{equation}
 for every vertex $x$ in the $3 \times 3$ square centered at 0.
 More generally, letting $\bar \eta$ be the configuration obtained from $\eta$ by switching the state of all corners, we get
 $$ \sum_{x \in \Z^2} \ \phi (\eta, x) \ = \ \sum_{x \in \Z^2} \ \phi (\bar \eta, x) \ + \ 9 \,(c_- - c_+) \ = \ 9 \,(c_- - c_+) $$
 where the first equation is obtained by applying \eqref{eq:corner} at each corner, and where the second equation follows from
 Lemma \ref{lem:neutral} and the fact that the configuration $\bar \eta$ obtained by switching the state of each corner is a
 regular cluster with no corner.
 This completes the proof.
\end{proof} \\

%%%%%%%%%%%%%%%%%%%%%%%%%%%%%%%%%%%%%%%%%%%%%%%%%%%%%%%%%%%%%%%%%%%%%%%%%%%%%%%%%%%%%%%%%%%%%%%%%%%%%%%%%%%%%%%%%%%%%%%%%%%%%%%%%%%%%%%%%%

\noindent\textbf{Acknowledgment}.
 The authors would like to thank two anonymous referees for many comments that helped to improve the clarity of this article.

%%%%%%%%%%%%%%%%%%%%%%%%%%%%%%%%%%%%%%%%%%%%%%%%%%%%%%%%%%%%%%%%%%%%%%%%%%%%%%%%%%%%%%%%%%%%%%%%%%%%%%%%%%%%%%%%%%%%%%%%%%%%%%%%%%%%%%%%%%

\end{document}